\title{Inexact Proximal Newton methods in Hilbert Spaces}
\crefname{hypothesis}{Hypothesis}{Hypotheses}
\title{Inexact Proximal Newton methods in Hilbert Spaces\thanks{Submitted to the editors 04/26/2022.
		\funding{This work was funded by the DFG SPP 1962: Non-smooth and Complementarity-based Distributed Parameter Systems -- Simulation and Hierarchical Optimization; Project number: SCHI 1379/6-1}}}
\author{Bastian Pötzl\thanks{Department of Applied Mathematics, Universität Bayreuth, Bayreuth, Germany 
		(\email{bastian.poetzl@uni-bayreuth.de}, \email{anton.schiela@uni-bayreuth.de}).}
	\and Anton Schiela\footnotemark[2]
	\and Patrick Jaap\thanks{Institute of Numerical Mathematics, TU Dresden, Dresden, Germany
		(\email{patrick.jaap@tu-dresden.de}).}}
\date{\today}
\newcommand{\NN}{\mathbb{N}}
\newcommand{\RR}{\mathbb{R}}
\newcommand{\de}{\mathrm{d}}
\newcommand{\norm}[2]{\bigl\| #1 \bigl\|_{#2}}
\newcommand{\argmin}{\operatornamewithlimits{argmin}}
\newcommand{\p}{\mathcal{P}}
\newcommand{\Lin}{\mathcal{L}(X,X^*)}
\newcommand{\keins}{\kappa_1}
\newcommand{\kzwei}{\kappa_2}
\newcommand{\ksum}{\kappa_1 + \kappa_2}
\newcommand{\riesz}{\mathcal R}
\newcommand\xrowht[2][0]{\addstackgap[.5\dimexpr#2\relax]{\vphantom{#1}}}
\newcommand{\omax}{\tilde \omega_{\mathrm{max}}}
\begin{document}

\maketitle

\begin{abstract}
  We consider Proximal Newton methods with an inexact computation of update steps. To this end, we introduce two inexactness criteria which characterize sufficient accuracy of these update step and with the aid of these investigate global convergence and local acceleration of our method. The inexactness criteria are designed to be adequate for the Hilbert space framework we find ourselves in while traditional inexactness criteria from smooth Newton or finite dimensional Proximal Newton methods appear to be inefficient in this scenario. The performance of the method and its gain in effectiveness in contrast to the exact case are showcased considering a simple model problem in function space.
\end{abstract}

\begin{keywords}
  Non-smooth Optimization , Optimization in Hilbert space , Proximal Newton , Inexactness
\end{keywords}

\begin{AMS}
  49M15, 49M37, 65K10 
\end{AMS}

\section{Introduction} \label{sec:intro}

In the present work we extend the idea of Proximal Newton methods in Hilbert spaces as presented in \cite{Poetzl2022} to admit an inexact computation of update steps by solving the respective subproblem only up to prescribed accuracy. We consider the composite minimization problem
\begin{align}\label{eq:prob} 
\min_{x \in X} F(x) \coloneqq f(x) + g(x)
\end{align}
on some real Hilbert space $(X,\langle\cdot,\cdot\rangle_X)$ where $f: X \rightarrow \RR$ is assumed to be smooth in some adequate sense and $g: X \rightarrow \RR$ is possibly not. We pay particular attention to the infinite-dimensionality of the underlying Hilbert spaces and thus develop inexactness criteria for update step computation that are sufficiently easy to evaluate, help us preserve convergence properties of the exact case as considered in \cite{Poetzl2022} and reduce the computational effort significantly.

For an overview of the development of Proximal Newton methods themselves consider \cite{Poetzl2022}. Here, we want to focus on the realization of the inexactness aspect and consider corresponding most recent literature in this introductory section. The use of gradient-like inexactness criteria which can be seen as the direct generalization of the one for classical smooth Newton methods in \cite{Dembo1982} is quite common, cf. \cite{Lee2014, Byrd2015, Kanzow2020}. 

In \cite{Lee2014} additional knowledge of bounds on the second-order bilinear forms as well as the Lipschitz constant of $f'$ is necessary and only local convergence has been investigated in the inexact case. Globalization of the ensuing method has been achieved in \cite{Kanzow2020} by using a Proximal Gradient substitute step in case the inexactly computed second order step does not suffice a sufficient decrease criterion or the step computation subproblem is ill-formed due to non-convexity which thus can be overcome as well. In \cite{Byrd2015} the particular case of $L_1$-regularization for machine learning applications has been considered and thus the inexactness criterion has further been specified and also here enhanced with a decrease criterion in the quadratic approximation of the composite objective function. The latter has then been tightened in order to achieve local acceleration. 

Another approach to inexactness criteria is measuring the residual within the step computation subproblem. In \cite{Li2016}, where objective functions consisting of the sum of a thrice continuously differentiable smooth part and a self-concordant non-smooth part have been considered, the residual vector within optimality conditions for update computation is supposed to be bounded in norm with respect to the already computed inexact step. However, the residual can also be measured via functional descent in the quadratic approximation of the composite objective $F$, cf. \cite{Lee2019, Scheinberg2016}. While in \cite{Lee2019} the second order model decrease bound against its optimal value has not directly been tested but simply assumed to hold after a finite (and fixed) number of subproblem solver iterations, the authors in \cite{Scheinberg2016} have taken the structure of their randomized coordinate descent subproblem solver into account and also have given quadratic bounds for the prefactor constant within their model descent estimate in order to obtain sufficient convergence results.

All of the above works have in common that they depend on the finite dimensional structure of the underlying Euclidean space. In particular, the efficient computation of proximal gradients, required for the evaluation of inexactness cirteria, relies on the diagonal structure of the underlying scalar product $\langle \cdot,\cdot\rangle_X$, which is usually not present in (discretized) function spaces, as for example, Sobolev spaces. Moreover, all current approaches consider fixed search directions which are then scaled by some step length parameter. 

Our contributions beyond their work can be summarized as follows: 
Most importantly, we replace the Euclidean space setting with a Hilbert space one in order to rigorously allow function space applications of our method. In particular, we are interested in the important case where $X$ is a Sobolev space. Then, a diagonal approximation of $\langle \cdot,\cdot \rangle_X$ after discretization would lead to proximal operators that suffer from mesh-dependent condition numbers. For the efficient computation of proximal steps we thus take advantage of a non-smooth multigrid method. Specifically, we use a Truncated Non-smooth Newton Multigrid Method (TNNMG), cf. \cite{Graeser2018} in our numerical implementation. Consequently, our inexactness criteria need to be constructed in such a way that their evaluation is efficient in this context. Existing criteria can only be employed efficiently, if $\langle \cdot,\cdot \rangle_X$ enjoys a diagonal structure. 


Additionally, ellipticity of the bilinear forms for forming quadratic approximations of our objective functional as well as convexity of the non-smooth part $g$ has often crucial been in literature. We drop these prerequisites and use a less restrictive framework of convexity assumptions for the composite objective function $F$. Finally, we do not demand second order differentiability with Lipschitz-continuous second order derivative of the smooth part $f$ but instead settle for adequate semi-smoothness assumptions. 


Let us now give the precise set of assumptions in which we will discuss the convergence properties of inexact Proximal Newton methods. As pointed out beforehand, we find ourselves in a real Hilbert space $(X,\langle\cdot,\cdot\rangle_X)$ with corresponding norm $\|v\|_X=\sqrt{\langle v,v\rangle_X}$ and dual space $X^*$. This choice of $X$ also provides us with the Riesz-Isomorphism $\mathcal R : X \to X^*$, defined by $\mathcal Rx=\langle x,\cdot\rangle_X$, which satisfies $\norm{\mathcal{R} x}{X^*} = \norm{x}{X}$ for every $x\in X$. Since $\mathcal R$ is non-trivial in general, we will not identify $X$ and $X^*$. 

The smooth part of our objective functional $f:X \rightarrow \RR$ is assumed to be continuously differentiable with Lipschitz-continuous derivative $f':X\to X^*$, i.e., we can find some constant $L_f > 0$ such that for every $x,y \in X$ we obtain the estimate
\begin{align} \label{eq:Lipschitz}
\norm{f'(x) - f'(y)}{X^*} \leq L_f \norm{x-y}{X} \, .
\end{align}

As mentioned beforehand, we will use the base algorithm from \cite{Poetzl2022} as our point of departure. This means that we consider a variation of the Proximal Newton method which is globalized by an additional norm term within the subproblem for step computation. As a consequence, the latter reads
\begin{align} \label{eq:dampedstep}
\Delta x (\omega) \coloneqq \argmin_{\delta x \in X} \lambda_{x,\omega}(\delta x)
\end{align}
where the regularized second order decrease model $\lambda_{x,\omega}:X \to \RR$ is given by
\begin{align*}
\lambda_{x,\omega}(\delta x) \coloneqq f'(x)\delta x + \frac 12 H_x (\delta x, \delta x) + \frac{\omega}{2}\norm{\delta x}{X}^2 + g(x+\delta x) - g(x) \, .
\end{align*}
The updated iterate then takes the form $x_+ (\omega) \coloneqq x + \Delta x (\omega)$.  

The second order model of the smooth part $f$ from above also has to be endowed with adequate prerequisites. Notationally identifying the linear operators $H_x \in \Lin$ with the corresponding symmetric bilinear forms $H_x : X \times X\to \RR$ we write $(H_x v)(w)=H_x(v,w)$ and abbreviate $H_x(v)^2=H_x(v,v)$. Uniform boundedness of the $H_x$ along the sequence $(x_k)$ of iterates in the form
\begin{align} \label{eq:Hxbound}
\norm{H_{x_k}}{\Lin} \leq M \quad \text{for some (uniform)} \quad M > 0 \, .
\end{align}
will also be of importance in what follows. Furthermore, along the sequence of iterates $(x_k)$ we assume a (possibly non-uniform) bound of the form
\begin{align} \label{eq:kappa1}
\forall \, k \in \NN \; \exists \, \keins \in \RR \, \forall v\in X \colon H_{x_k} (v)^2 \coloneqq H_{x_k}(v,v) \geq \keins \norm{v}{X}^2
\end{align}
which can be interpreted as an ellipticity assumption on $H_{x_k}$ in case the constant $\keins$ is positive. In this case, when considering exact (and smooth) Proximal Newton methods, where $H_x$ is given by the Hessian of $f$ at some point $x \in X$, \eqref{eq:kappa1} is equivalent to $\keins$-strong convexity of $f$.

While in a sufficiently smooth setting $H_x \coloneqq f''(x)$ is common, for most of the paper we may choose $H_x$ freely in the above framework. For fast local convergence, however, we will impose a semi-smoothness assumption, cf. \eqref{eq:semismooth}. Semi-smooth Newton methods in function space have been discussed, for example, in \cite{Ulbrich2002,Ulbrich2011,Hintermueller2002,Schiela2006}.

As far as the non-smooth part $g$ is concerned, we require lower semi-continuity as well as a bound of the form
\begin{align}\label{eq:kappa2}
g(sx+(1-s)y) \leq sg(x) + (1-s)g(y) - \frac{\kzwei}{2}s(1-s)\norm{x-y}{X}^2
\end{align}
for all $x,y \in X$ and all $s \in [0,1]$ for some $\kzwei \in \RR$. For $\kzwei > 0$ this estimate can be interpreted as $\kzwei$-strong convexity of $g$. In the latter case we can then conclude that $g$ is bounded from below, its level-sets $L_\alpha g$ are bounded for all $\alpha \in \RR$ and that their diameter shrinks to $0$ in the limit of $\alpha \to \inf_{x\in X} g$. 
Non-positivity of $\kzwei$ allows $g$ to be non-convex in a limited way. 

The theory behind Proximal Newton methods and the respective convergence properties evolve around the convexity estimates stated in \eqref{eq:kappa1} and \eqref{eq:kappa2}. We will assign particular importance to the interplay of the convexity properties of $f$ and $g$, i.e., the sum $\ksum$ will continue to play an important role over the course of the present treatise. Apparently, the update step in \eqref{eq:dampedstep} is well defined for every $\omega>0$ if $\ksum > 0$. This holds also in the case of $\ksum \leq 0$ for every $\omega > -(\ksum)$ due to the bounds stated in \eqref{eq:kappa1}, \eqref{eq:kappa2} and the strong convexity of the norm term. For this reason, we will assume $\omega > -(\ksum)$ wherever it appears. 

The above demands on $f$, $g$, $H_x$ and $\omega$ constitute the standing assumptions for the further investigation which we impose for the entirety of the paper.


Let us now briefly outline the structure of our work: In Section~\ref{sec:gradientmap} we introduce the notion of composite gradient mappings and consider some of their basic properties. Afterwards, in Section~\ref{sec:local}, we take advantage of the acquired knowledge and introduce the first inexactness criterion in order to investigate local convergence of our method as well as the influence of both damping and inexactness. Section~\ref{sec:global} then considers the globalization phase of our inexact Proximal Newton method and for this reason introduces a second inexactness criterion which compares the functional decrease of inexact updates with steps originating from a simpler subproblem. Thus, we also achieve sufficient global convergence results. In order to then benefit from local acceleration, we investigate the transition to local convergence in Section~\ref{sec:transition}. To this end, we need to ensure that close to optimal solutions also arbitrarily weakly damped update steps yield sufficient decrease. Lastly, we put our method to the test in Section~\ref{sec:numres} and display global convergence as well as local acceleration considering a simple model problem in function space. Concluding remarks can be found in Section~\ref{sec:concl}.

\section{Composite Gradient mappings and their Properties} \label{sec:gradientmap}

The main goal to keep in mind is not only to introduce the concept of inexactness to the computation of update steps of the Proximal Newton method from \cite{Poetzl2022} but also quantify the influence of damping update steps to the local convergence rate of our algorithm. 

\subsection{Definition and Representation via Proximal Mappings}
For this cause, we take advantage of the notion of regularized composite gradient mappings $G_\tau^\Phi:X \to X$ for some composite functional $\Phi: X \to \RR$ in the form $\Phi (x) \coloneqq \phi(x) + \psi(x)$ with smooth part $\phi: X \to \RR$ and non-smooth part $\psi: X \to \RR$. The aforementioned mapping is defined via 
\begin{align} \label{eq:gradientdef}
\begin{split}
G_\tau^\Phi (y) &\coloneqq -\tau \bigg[\argmin_{\delta y \in X} \phi'(y) \delta y + \frac{\tau}{2} \norm{\delta y}{X}^2 + \psi(y + \delta y) - \psi(y)\bigg]
\end{split}
\end{align}
for $y \in X$ and some regularization parameter $\tau > 0$ the assumptions on which we will specify over the course of the current section. For the derivation of useful estimates for composite gradient mappings, the so-called scaled dual proximal mapping $\p_\psi^H:X^* \to X$, defined via 
\begin{align*}
\p_\psi^H(\ell) \coloneqq \argmin_{z \in X} \psi(z) + \frac 12 H(z,z) - \ell (z)
\end{align*}
for arbitrary $\ell \in X^*$ and some symmetric bilinear form $H$ sufficing \eqref{eq:kappa1} as well as some real valued function $\psi$ satisfying \eqref{eq:kappa2} for constants $\keins,\kzwei \in \RR$ with $\ksum > 0$, will come in handy. In what is to come, we will take advantage of the following two crucial results concerning dual proximal mappings which have been stated and proven in \cite{Poetzl2022}. The first one is a general estimate for the image of such operators which generalizes the assertions of the so called second prox theorem, cf. e.g. \cite[Chapter~6.5]{Beck2017}. The second one is a Lipschitz-continuity result.
\begin{proposition}[\cite{Poetzl2022}, Proposition 2 and Corollary 1] \label{prop:scndprox}
	Let $H$ and $\psi$ satisfy the assumptions \eqref{eq:kappa1} and \eqref{eq:kappa2} with \mbox{$\ksum > 0$}. Then for any $\ell \in X^*$ the image of the corresponding proximal mapping \mbox{$u \coloneqq \p_\psi^H(\ell)$} satisfies the estimate
	\begin{align*}
		\big[ \ell - H(u) \big](\xi - u) \leq \psi(\xi) - \psi(u) - \frac{\kzwei}{2}\norm{\xi - u}{X}^2
	\end{align*}
	for all $\xi \in X$. Additionally, for all $\ell_1,\ell_2 \in X^*$ the following inequality holds:
	\begin{align*}
		\norm{\p_\psi^H(\ell_1) - \p_\psi^H(\ell_2)}{X} \leq \frac{1}{\ksum} \norm{\ell_1 - \ell_2}{X^*} \, .
	\end{align*}
\end{proposition}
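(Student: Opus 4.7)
Both assertions should be proved from the variational characterization that $u=\p_\psi^H(\ell)$ is the unique minimizer of $J(z):=\psi(z)+\tfrac12 H(z,z)-\ell(z)$; uniqueness follows because \eqref{eq:kappa1} and \eqref{eq:kappa2} combine to make $J$ strongly convex with modulus $\ksum>0$, which is exactly the assumption made.

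For the first estimate I would test the minimality of $u$ against the convex combination $u_s:=(1-s)u+s\xi=u+s(\xi-u)$ for arbitrary $\xi\in X$ and $s\in(0,1]$. Applying the chord-wise inequality \eqref{eq:kappa2} to $\psi(u_s)$ and expanding the quadratic part $\tfrac12 H(u_s,u_s)-\ell(u_s)$ bilinearly around $u$, the inequality $J(u)\leq J(u_s)$ reduces, after cancellation of the terms independent of $s$, to
\begin{align*}
0\leq s\bigl[\psi(\xi)-\psi(u)+H(u,\xi-u)-\ell(\xi-u)\bigr]+\tfrac{s^2}{2}H(\xi-u)^2-\tfrac{\kzwei}{2}s(1-s)\norm{\xi-u}{X}^2.
\end{align*}
Dividing by $s>0$ and passing to the limit $s\to 0^+$ eliminates the $O(s)$ remainders and yields the first claim after rearranging.

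For the Lipschitz estimate I would apply the first inequality twice: once to $(\ell_1,u_1)$ with test vector $\xi=u_2$, and once to $(\ell_2,u_2)$ with $\xi=u_1$. Adding the two inequalities, the $\psi$-terms cancel, the dual pairings combine into $[\ell_1-\ell_2](u_1-u_2)$, and the two $H$-contributions combine to $H(u_1-u_2)^2$ by bilinearity. Invoking the ellipticity bound \eqref{eq:kappa1} then gives
\begin{align*}
\ksum\cdot\norm{u_1-u_2}{X}^2\leq[\ell_1-\ell_2](u_1-u_2),
\end{align*}
and the duality estimate $|[\ell_1-\ell_2](u_1-u_2)|\leq\norm{\ell_1-\ell_2}{X^*}\norm{u_1-u_2}{X}$ concludes after dividing by $\norm{u_1-u_2}{X}$ (with the case $u_1=u_2$ being trivial).

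The only real point to keep in mind is that $\kzwei$ may be non-positive, so $\psi$ need not be convex in the usual sense and no subgradient calculus is available; conversely, no assumption on differentiability of $\psi$ is needed. The chord-wise bound \eqref{eq:kappa2} is precisely the right structural hypothesis to survive the one-sided perturbation $s\to 0^+$ used above, which is what makes this elementary approach work without further regularity.
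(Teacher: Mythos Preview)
Your proof is correct. The paper does not actually supply its own proof of this proposition; it is quoted verbatim as a result from \cite{Poetzl2022} (Proposition~2 and Corollary~1 there), so there is no in-paper argument to compare against. Your approach---testing minimality along the segment $u_s=(1-s)u+s\xi$, invoking the semiconvexity bound \eqref{eq:kappa2}, and passing $s\to 0^+$ for the first estimate; then symmetrizing and using \eqref{eq:kappa1} for the Lipschitz bound---is the standard route for this type of result and is exactly how the cited reference proceeds. Your closing remark that no subgradient calculus or convexity of $\psi$ alone is required, only the chord-wise bound \eqref{eq:kappa2}, is to the point and worth keeping.
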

With the aid of scaled proximal mappings, we can express the composite gradient mapping as
\begin{align} \label{eq:gradientprox}
G_\tau^\Phi (y) = \tau \big[ y - \p_\psi^{\tau \mathcal R} \big( \tau \mathcal R y - \phi'(y) \big) \big] \, .
\end{align}
Let us now justify the designation of $G_\tau^\Phi$ as a regularized composite gradient mapping. If we consider the smooth case of $\psi = 0$, the proximal mapping takes the form $\p_\psi^H(\ell) = H^{-1}\ell$. This fact carries over to the definition of the gradient mapping via
\begin{align*}
G_\tau^\phi (y) = \tau \big[ y - (\tau \mathcal R)^{-1} \big( \tau \mathcal R y - \phi'(y) \big) \big] = \riesz^{-1} \phi'(y)
\end{align*}
which resembles the infinite dimensional counterpart of the gradient $\nabla \phi$ in Euclidean space. Note that this consistency result holds for all $\tau > 0$. 

Another consideration which expresses the consistency between $G_\tau^F$ and some actual 'smooth' gradient of $F=f+g$ with respect to our minimization problem \eqref{eq:prob} is the following: Let then $G_\tau^F (x_*) = 0$ hold for some $x_* \in X$ and $\tau \geq 0$. This is equivalent to the fixed point equation $x_* = \p_g^{\tau \mathcal R} \big( \tau \mathcal R x_* - f'(x_*) \big)$ which can then again be transformed to $-f'(x_*) \in \partial_F g(x_*)$ in $X^*$. Consequently, we recognize that the composite gradient mapping is zero if and only if we evaluate it at critical points of the underlying minimization problem \eqref{eq:prob}.

\subsection{Key Properties and Auxiliary Estimates}
For now, let us derive some key properties of the composite gradient mappings which will come in handy as we quantify the influence of both inexactness and damping to local convergence rates of our algorithm. 

Before departing on this endeavor we introduce the modified quadratic model $\hat F_{x,\omega}: X \to \RR$ of the composite objective functional $F$ around $x\in X$ with regularization parameter $\omega$ via
\begin{align} \label{eq:dampedmodel}
\begin{split}
	\hat F_{x,\omega} (y) &\coloneqq F(x) + \lambda_{x,\omega}(y-x) = f(x) + f'(x)(y-x) + \frac 12 H_x(y - x)^2 + g(y) + \frac{\omega}{2}\norm{y-x}{X}^2 \, .
\end{split}
\end{align}
The corresponding composite gradient mapping $G^{\hat F_{x,\omega}}_\tau$ will play an important role. In that regard, we note that in the framework of the definition of the gradient mapping in \eqref{eq:gradientdef} we thus have $\Phi = \hat F_{x,\omega} = \hat \phi + \hat \psi$ with
\begin{align} \label{eq:modelsplit}
\hat \phi (y) =  f(x) + f'(x)(y-x) + \frac 12 \big(H_x + \omega \riesz \big)(y - x)^2 \quad , \quad \hat \psi (y) = g(y)
\end{align}
and thereby $\hat \phi '(y) = f'(x) + \big(H_x + \omega \riesz \big)(y - x)$ for any $y \in X$. The following lemma provides us with helpful estimates for the norm difference of composite gradient mappings both from above and below.

\begin{lemma} \label{lem:GFxyz}
	For every $x,y,z \in X$ and the choice $\tau \coloneqq \omega + \frac{1}{2}\big( \norm{H_x}{\Lin}+\keins \big)$, the regularized composite gradient mapping suffices the estimate
	\begin{align} \label{eq:GFxyz}
		\tau \big( 1 - \mathcal H \big) \norm{y-z}{X} \leq \norm{G_{\tau}^{\hat F_{x,\omega}}(y) - G_{\tau}^{\hat F_{x,\omega}}(z)}{X} \leq \tau \big( 1 + \mathcal H \big) \norm{y-z}{X}
	\end{align}
	where we abbreviated $\mathcal H \coloneqq \frac{\norm{H_x}{\Lin}-\keins}{2(\tau + \kzwei)}$ \, .
\end{lemma}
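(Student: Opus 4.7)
The plan is to apply the representation of the composite gradient mapping via the scaled proximal mapping from \eqref{eq:gradientprox}, exploiting the fact that for the model $\hat F_{x,\omega}$ the smooth part has affine derivative, and then to reduce both estimates to a Lipschitz bound on the proximal mapping plus an operator-norm bound on a symmetric bilinear form.

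First, I would unfold \eqref{eq:gradientprox} with $\Phi = \hat F_{x,\omega}$, $\hat\phi$ and $\hat\psi$ as in \eqref{eq:modelsplit}, giving
\begin{align*}
G_\tau^{\hat F_{x,\omega}}(y) = \tau\bigl[y - u(y)\bigr], \qquad u(y) \coloneqq \p_g^{\tau\riesz}\!\Bigl(\tau\riesz y - f'(x) - (H_x + \omega\riesz)(y-x)\Bigr).
\end{align*}
Subtracting the analogous expression at $z$ and using the triangle inequality yields
\begin{align*}
\tau\bigl(\norm{y-z}{X} - \norm{u(y)-u(z)}{X}\bigr) \le \norm{G_\tau^{\hat F_{x,\omega}}(y) - G_\tau^{\hat F_{x,\omega}}(z)}{X} \le \tau\bigl(\norm{y-z}{X} + \norm{u(y)-u(z)}{X}\bigr),
\end{align*}
so both inequalities in \eqref{eq:GFxyz} reduce to showing $\norm{u(y)-u(z)}{X} \le \mathcal H \norm{y-z}{X}$.

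Next, I would invoke the Lipschitz estimate from \cref{prop:scndprox}, noting that $\tau\riesz$ satisfies \eqref{eq:kappa1} with constant $\tau$ and $g$ satisfies \eqref{eq:kappa2} with constant $\kzwei$, so the Lipschitz constant of $\p_g^{\tau\riesz}$ is $1/(\tau+\kzwei)$. Since the $f'(x)$ contribution to the argument cancels, the difference of arguments is
\begin{align*}
\bigl(\tau\riesz - H_x - \omega\riesz\bigr)(y-z) = \bigl((\tau-\omega)\riesz - H_x\bigr)(y-z),
\end{align*}
and with the specific choice $\tau - \omega = \tfrac12(\norm{H_x}{\Lin}+\keins)$ this is a \emph{symmetric} operator $A \in \Lin$. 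I would then bound its operator norm: for any $v\in X$,
\begin{align*}
A(v,v) = (\tau-\omega)\norm{v}{X}^2 - H_x(v,v) \in \Bigl[\tfrac{\keins - \norm{H_x}{\Lin}}{2}, \tfrac{\norm{H_x}{\Lin}-\keins}{2}\Bigr]\norm{v}{X}^2,
\end{align*}
using \eqref{eq:kappa1} and the definition of $\norm{H_x}{\Lin}$. Because $A$ is symmetric, $\norm{A}{\Lin} = \sup_{\norm{v}{X}=1} |A(v,v)| = \tfrac12(\norm{H_x}{\Lin}-\keins)$.

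Combining the two ingredients,
\begin{align*}
\norm{u(y)-u(z)}{X} \le \frac{1}{\tau+\kzwei}\,\norm{A(y-z)}{X^*} \le \frac{\norm{H_x}{\Lin}-\keins}{2(\tau+\kzwei)}\,\norm{y-z}{X} = \mathcal H\,\norm{y-z}{X},
\end{align*}
and plugging back into the two-sided triangle inequality gives \eqref{eq:GFxyz}. The only slightly subtle step is the operator-norm computation: it relies on the symmetry of $(\tau-\omega)\riesz - H_x$ to identify $\norm{A}{\Lin}$ with the numerical radius, which is what makes the specific choice of $\tau$ (the midpoint between $\keins$ and $\norm{H_x}{\Lin}$, shifted by $\omega$) the natural one and produces the symmetric prefactor $(1\pm\mathcal H)$.
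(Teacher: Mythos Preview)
Your argument is correct and follows essentially the same route as the paper: both reduce \eqref{eq:GFxyz} via the proximal representation and the triangle inequality to the bound $\norm{u(y)-u(z)}{X}\le\mathcal H\norm{y-z}{X}$, then combine the Lipschitz estimate of \cref{prop:scndprox} with an operator-norm bound on $(\tau-\omega)\riesz-H_x$; the paper phrases the latter as a spectral-radius computation for the self-adjoint operator $\riesz^{-1}\bigl((\tau-\omega)\riesz-H_x\bigr)$, while you use the equivalent numerical-radius identity for symmetric bilinear forms. The only point you omit is the verification that $\tau+\kzwei>0$ (needed to invoke \cref{prop:scndprox}), which the paper checks via $\tau=\omega+\tfrac12(\norm{H_x}{\Lin}+\keins)\ge\omega+\keins>-\kzwei$.
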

\begin{proof}
	As we insert the characterizations of the respective regularized composite gradient mappings as in \eqref{eq:gradientprox}, we perceive that we can represent their norm difference via
	\begin{align*}
		\begin{split}
			\norm{G_{\tau}^{\hat F_{x,\omega}}(y) - G_{\tau}^{\hat F_{x,\omega}}(z)}{X} = \tau \norm{ (y-z)- \big(\p_y - \p_z\big) }{X}
		\end{split}
	\end{align*}
	with abbreviations $\mathcal P_\xi \coloneqq \p_g^{\tau \riesz}\big( \tau \riesz \xi - \big[ f'(x) + \big(H_x + \omega \riesz\big)(\xi - x)\big] \big)$ for $\xi \in \{y,z\}$. This provides us with the bounds
	\begin{align*}
		\tau \big( \norm{y-z}{X} - \norm{\p_y - \p_z}{X} \big) \leq \norm{G_{\tau}^{\hat F_{x,\omega}}(y) - G_{\tau}^{\hat F_{x,\omega}}(z)}{X} \leq \tau \big( \norm{y-z}{X} + \norm{\p_y - \p_z}{X} \big)
	\end{align*}
	from above and below for the norm difference of gradient mappings. This shows that for the proof of \eqref{eq:GFxyz} it suffices to verify
	\begin{align} \label{eq:GFxyzproof}
		\norm{\p_y - \p_z}{X} \leq \mathcal H \norm{y-z}{X} = \frac{\norm{H_x}{\Lin}-\keins}{2(\tau + \kzwei)}\norm{y-z}{X} \, .
	\end{align}
	The Lipschitz result from Proposition~\ref{prop:scndprox} allows us to establish the following estimate for the norm difference of proximal mapping images in relation to their arguments:
	\begin{align}
		\begin{split}
			\norm{\p_y - \p_z}{X} &\leq \frac{1}{\tau + \kzwei} \norm{\tau \riesz y - \big(H_x + \omega \riesz\big)(y - x) - \big( \tau \riesz z - \big(H_x + \omega \riesz\big)(z - x)\big)}{X^*} \\
			&= \frac{1}{\tau + \kzwei} \norm{\big((\tau - \omega) \riesz - H_x\big)(y-z)}{X^*} \leq \frac{\norm{(\tau - \omega) \riesz - H_x}{\Lin}}{\tau + \kzwei} \norm{y-z}{X} \, . \label{eq:GFkxylin}
		\end{split}
	\end{align}
	Let us now pay particular attention to the $\Lin$-norm difference in the prefactor above. On the one hand, for any $\tau > - \kzwei$, we can estimate it by 
	\begin{align*}
		\norm{(\tau - \omega)\riesz - H_x}{\Lin} \leq |\tau - \omega| + \norm{H_x}{\Lin} \, .
	\end{align*}
	Nevertheless, with further assumptions on the gradient mapping regularization parameter $\tau$ we can deduce a better bound. To this end, we define $\lambda:=\tau -\omega$ and choose $\lambda_{\mathrm{opt}}$ such that $\norm{\lambda\riesz - H_x}{\Lin}$ is minimal. It is easy to see that the eigenvalues of the self-adjoint operator $H_x^\tau \coloneqq \riesz^{-1}(\lambda\riesz - H_x)$ lie in the interval $\big[ \lambda-\norm{H_x}{\Lin},\lambda -\keins  \big]$.
	
	In order to now minimize the norm of $H_x^\tau$, we recognize that it equals the spectral radius of $H_x^\tau$ and thus want to establish a symmetrical interval where eigenvalues can be located. This 
	yields the choice $\lambda_{\mathrm{opt}} \coloneqq  \frac{1}{2}\big( \norm{H_x}{\Lin} + \keins \big)$. In particular, this implies
	\begin{align*}
		\tau \coloneqq \omega+\lambda_{\mathrm{opt}}=\omega + \frac{1}{2}\big( \norm{H_x}{\Lin} + \keins \big) \geq \omega + \frac{|\keins|+\keins}{2} \geq \omega + \keins > -\kzwei
	\end{align*}
	by our choice of $\omega$ and consequently 
	\begin{align*}
		\norm{(\tau - \omega) \riesz - H_x}{\Lin} = \norm{H_x^\tau}{\mathcal L(X,X)} = \norm{H_x}{\Lin} - \lambda_{\mathrm{opt}} = \frac{1}{2}\big( \norm{H_x}{\Lin} - \keins \big) \, .
	\end{align*}
	Inserting this into the above estimate \eqref{eq:GFkxylin}, we obtain \eqref{eq:GFxyzproof} which completes the proof.
\end{proof}

For the next result, we take advantage of the solution property of exactly computed update steps from \eqref{eq:dampedstep}.

\begin{proposition} \label{prop:gradprop}
	Let $\Delta x (\omega)$ be an exactly computed update step as in \eqref{eq:dampedstep} at some $x \in X$. Then, for any $\tau > -\kzwei$ the following identity holds:
	\begin{align} \label{eq:GFdeltaxzero}
	G_\tau^{\hat F_{x,\omega}}\big( x + \Delta x (\omega) \big) = 0 \, .
	\end{align}
\end{proposition}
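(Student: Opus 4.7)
The plan is to combine the proximal representation \eqref{eq:gradientprox} with two applications of the variational inequality from Proposition~\ref{prop:scndprox}. Writing $y_* := x + \Delta x(\omega)$, the representation reduces the claim $G_\tau^{\hat F_{x,\omega}}(y_*) = 0$ to the fixed point identity $y_* = \p_g^{\tau\riesz}\bigl(\tau\riesz y_* - \hat\phi'(y_*)\bigr)$. The entire argument consists in verifying this identity.

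The key observation I would make first is that the exact step $\Delta x(\omega)$ is itself the image of a scaled dual proximal map: since \eqref{eq:dampedstep} amounts to minimizing $\delta x \mapsto g(x + \delta x) + \tfrac{1}{2}(H_x + \omega\riesz)(\delta x)^2 + f'(x)\delta x$ up to the constant $g(x)$, we obtain $\Delta x(\omega) = \p_{g(x+\cdot)}^{H_x + \omega\riesz}(-f'(x))$. The shifted non-smooth term $g(x+\cdot)$ inherits \eqref{eq:kappa2} with the same $\kzwei$, while $H_x + \omega\riesz$ satisfies \eqref{eq:kappa1} with the improved constant $\keins + \omega$, so Proposition~\ref{prop:scndprox} applies thanks to the standing assumption $\omega > -(\keins + \kzwei)$. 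Its variational estimate, after the change of variable $z := x + \xi$ and using $\hat\phi'(y_*) = f'(x) + (H_x + \omega\riesz)\Delta x(\omega)$ from \eqref{eq:modelsplit}, becomes
\begin{align*}
-\hat\phi'(y_*)(z - y_*) \leq g(z) - g(y_*) - \frac{\kzwei}{2}\norm{z - y_*}{X}^2 \quad \text{for all } z \in X.
\end{align*}

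To close the argument I would set $v := \p_g^{\tau\riesz}(\tau\riesz y_* - \hat\phi'(y_*))$ and apply Proposition~\ref{prop:scndprox} a second time with $H = \tau\riesz$, $\psi = g$ and the corresponding $\ell$, which is admissible because $\tau > -\kzwei$. Testing that inequality at $\xi = y_*$ and the previous one at $z = v$ and adding them, the linear terms in $\hat\phi'(y_*)$ cancel and the functional terms $g(v) - g(y_*)$ telescope, leaving the bound $(\tau + \kzwei)\norm{y_* - v}{X}^2 \leq 0$. Since $\tau + \kzwei > 0$, this forces $y_* = v$, yielding the fixed point identity and hence \eqref{eq:GFdeltaxzero}. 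I expect the main delicacy to be purely bookkeeping: correctly tracking the shift by $x$ when moving between the proximal formulation of $\Delta x(\omega)$ and the proximal representation of $G_\tau^{\hat F_{x,\omega}}$ at $y_*$, as well as checking that the transferred constants $\keins + \omega$ and $\kzwei$ meet the hypothesis of Proposition~\ref{prop:scndprox}.
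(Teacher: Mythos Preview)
Your argument is correct, but it takes a more circuitous route than the paper. The paper works directly with first-order optimality conditions in subdifferential form: the inner minimization defining $G_\tau^{\hat F_{x,\omega}}(y_*)$ is strongly convex for $\tau > -\kzwei$, so its unique minimizer $\delta\bar x$ is characterized by
\[
0 \in f'(x) + (H_x + \omega\riesz)\Delta x(\omega) + \tau\riesz\,\delta\bar x + \partial_F g\bigl(x + \Delta x(\omega) + \delta\bar x\bigr),
\]
and the optimality condition for the exact step $\Delta x(\omega)$ in \eqref{eq:dampedstep} is precisely this inclusion with $\delta\bar x = 0$. Hence $\delta\bar x = 0$ is the minimizer and the gradient mapping vanishes. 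By contrast, you recast both optimality statements as variational inequalities via Proposition~\ref{prop:scndprox}, then add them and exploit strict positivity of $\tau + \kzwei$ to force $y_* = v$. This is perfectly sound, and has the mild advantage of staying entirely within the proximal-mapping machinery already set up in Section~\ref{sec:gradientmap}; the paper's version, however, is shorter and makes the mechanism---that the two optimality conditions literally coincide at $\delta\bar x = 0$---more transparent.
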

\begin{proof}
	We consider the minimization problem within brackets in the definition of the regularized composite gradient mapping in \eqref{eq:gradientdef}. Here, we have to insert the derivative $\phi'$ of the smooth part of the regularized model $\hat F_{x,\omega}$ as in \eqref{eq:modelsplit} evaluated at $y = x + \Delta x (\omega)$ which yields 
	\begin{align} \label{eq:argmintau}
	\argmin_{\delta x \in X} \big[ f'(x) + \big( H_x + \omega \riesz \big)\Delta x (\omega) \big]\delta x + \frac \tau 2 \norm{\delta x}{X}^2 + g\big( x + \Delta x (\omega) + \delta x \big) - g\big( x + \Delta x (\omega)\big) \, .
	\end{align}
%
	By strong convexity of the objective function for $\tau > -\kzwei$, the above minimization problem has a unique solution $\delta \bar x \in X$. By first order optimality conditions, this solution then satisfies the dual space inclusion
	\begin{align} \label{eq:fooctau}
	0 \in f'(x) + \big( H_x + \omega \riesz \big)\Delta x (\omega) + \partial_F g\big( x + \Delta x (\omega) + \delta \bar x \big) + \tau \riesz \delta \bar x 
	\end{align}
	for the Fr\'echet-subdifferential $\partial_F g$. Note here that the exactly computed update step $\Delta x (\omega)$ as a solution of \eqref{eq:dampedstep} suffices
	\begin{align*}
	0 \in f'(x) + \big( H_x + \omega \riesz \big)\Delta x (\omega) + \partial_F g\big( x + \Delta x (\omega)\big)
	\end{align*}
	which directly yields that $\delta \bar x = 0$ satisfies \eqref{eq:fooctau} and is thereby the unique solution of \eqref{eq:argmintau}. This completes the proof of \eqref{eq:GFdeltaxzero}.
\end{proof}	

Let us now consider the difference of gradient mappings of the objective function $F$ and its modified second order model $\hat F_{x,\omega}$ at optimal solutions $x_*$ of problem \eqref{eq:prob}.

For the following we require $f'$ to be semi-smooth near an optimal solution $x_*$ of our problem \eqref{eq:prob} with respect to $H_x$, i.e., that the following approximation property holds:
\begin{align} \label{eq:semismooth}
\norm{f'(x_*) - f'(x) - H_{x}(x_* - x)}{X^*} = o\big( \norm{x-x_*}{X} \big) \, .
\end{align}
Adequate definitions of $H_x$ can be given via a so-called Newton derivative from $\partial_N f'(x)$, also known as the generalized differential $\partial^* f'(x)$ for Lipschitz-continuous operators in finite dimensions, and for corresponding superposition operators, cf. \cite[Chapter~3.2]{Ulbrich2011}.
\begin{lemma} \label{lem:GFGFk}
	Let the semi-smoothness assumption \eqref{eq:semismooth} hold near an optimal solution $x^* \in X$. Then, the regularized composite gradient mapping satisfies the following estimate for each $\tau > -\kzwei$ and $x \in X$:
	\begin{align*}
	\norm{G_{\tau}^F(x_*) - G_{\tau}^{\hat F_{x,\omega}}(x_*)}{X} \leq o\big( \norm{x_* - x}{X} \big) + \frac{\tau \, \omega}{\tau + \kzwei} \norm{x_* - x}{X} \, .
	\end{align*}
\end{lemma}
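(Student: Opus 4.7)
The plan is to write both composite gradient mappings via their proximal representation \eqref{eq:gradientprox}, reduce the difference to a difference of proximal mappings evaluated at two dual-space arguments, and then apply the Lipschitz estimate from Proposition~\ref{prop:scndprox} together with the semi-smoothness assumption \eqref{eq:semismooth}.

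First I would compute the two composite gradient mappings at $x_*$ explicitly. For $F = f + g$, the formula \eqref{eq:gradientprox} gives
\begin{align*}
G_\tau^F(x_*) = \tau \big[ x_* - \p_g^{\tau \riesz}\big( \tau \riesz x_* - f'(x_*) \big) \big] \, .
\end{align*}
For $\hat F_{x,\omega}$, the splitting \eqref{eq:modelsplit} yields $\hat\phi'(x_*) = f'(x) + (H_x + \omega\riesz)(x_* - x)$ and $\hat\psi = g$, so
\begin{align*}
G_\tau^{\hat F_{x,\omega}}(x_*) = \tau \big[ x_* - \p_g^{\tau \riesz}\big( \tau \riesz x_* - f'(x) - (H_x + \omega\riesz)(x_* - x) \big) \big] \, .
\end{align*}
Subtracting these, the $\tau\riesz x_*$ and $x_*$ terms cancel, and the difference is $\tau$ times the difference of two proximal mappings of the same functional $g$ with the same scaling $\tau\riesz$, applied to two dual-space arguments whose difference is exactly
\begin{align*}
f'(x_*) - f'(x) - H_x(x_* - x) - \omega\riesz(x_* - x) \, .
\end{align*}

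Second, I would invoke Proposition~\ref{prop:scndprox}: the bilinear form $\tau\riesz$ satisfies \eqref{eq:kappa1} with ellipticity constant $\tau$, and $g$ satisfies \eqref{eq:kappa2} with constant $\kzwei$, and by assumption $\tau > -\kzwei$ so that $\tau + \kzwei > 0$. The Lipschitz bound gives
\begin{align*}
\norm{G_{\tau}^F(x_*) - G_{\tau}^{\hat F_{x,\omega}}(x_*)}{X} \leq \frac{\tau}{\tau + \kzwei} \norm{f'(x_*) - f'(x) - H_x(x_* - x) - \omega\riesz(x_* - x)}{X^*} \, .
\end{align*}
Splitting the right-hand side via the triangle inequality and using $\norm{\riesz(x_* - x)}{X^*} = \norm{x_* - x}{X}$ together with the semi-smoothness assumption \eqref{eq:semismooth} yields
\begin{align*}
\norm{G_{\tau}^F(x_*) - G_{\tau}^{\hat F_{x,\omega}}(x_*)}{X} \leq \frac{\tau}{\tau + \kzwei}\big[ o(\norm{x_* - x}{X}) + \omega \norm{x_* - x}{X} \big] \, ,
\end{align*}
and absorbing the bounded prefactor $\tau/(\tau+\kzwei)$ into the little-$o$ term gives exactly the claimed estimate.

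There is no real obstacle here beyond bookkeeping: the only thing to be careful about is identifying that both gradient mappings involve the \emph{same} proximal operator (same regularization $\tau\riesz$ and same non-smooth part $g$), so that the Lipschitz estimate of Proposition~\ref{prop:scndprox} applies cleanly to the difference of their dual-space arguments. Once that observation is made, the semi-smoothness hypothesis \eqref{eq:semismooth} absorbs the Hessian-like discrepancy and the remaining $\omega\riesz(x_*-x)$ produces the second term of the bound.
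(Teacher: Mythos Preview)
Your proposal is correct and follows essentially the same approach as the paper: write both gradient mappings via \eqref{eq:gradientprox}, cancel the common $x_*$-terms to reduce to a difference of $\p_g^{\tau\riesz}$ at two dual arguments, apply the Lipschitz estimate of Proposition~\ref{prop:scndprox} with constant $1/(\tau+\kzwei)$, and then split using \eqref{eq:semismooth} and $\norm{\riesz(x_*-x)}{X^*}=\norm{x_*-x}{X}$. The paper's proof is identical in structure and detail.
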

\begin{proof}
	The proof here follows immediately by the characterization of the regularized composite gradient mapping as in \eqref{eq:gradientprox} and the semi-smoothness of $f'$ according to \eqref{eq:semismooth}. To go into detail, by Proposition~\ref{prop:scndprox} we have
	\begin{align*}
		\norm{G_{\tau}^F(x_*) &- G_{\tau}^{\hat F_{x,\omega}}(x_*)}{X} \\
		&= \tau \norm{\p_g^{\tau \riesz}\big( \tau \riesz x_* - f'(x_*) \big) - \p_g^{\tau \riesz}\big( \tau \riesz x_* - \big[ f'(x) + \big(H_x + \omega \riesz\big)(x_* - x)\big]  \big)}{X} \\
		&\leq \frac{\tau}{\tau + \kzwei} \norm{\big( \tau \riesz x_* - f'(x_*) \big) - \big( \tau \riesz x_* - \big[ f'(x) + \big(H_x + \omega \riesz\big)(x_* - x)\big]}{X^*} \\ 
		&\leq \frac{\tau}{\tau + \kzwei} \norm{f'(x_*) - \big( f'(x) + (H_x + \omega \riesz)(x_* - x) \big)}{X^*} \\
		&= o\big( \norm{x_* - x}{X} \big) + \frac{\tau \, \omega}{\tau + \kzwei} \norm{x_* - x}{X}
	\end{align*}
	the last identity of which follows by the aforementioned definition of $H_x \in \partial_N f'(x)$ as a Newton-derivative together with \eqref{eq:semismooth}.
\end{proof}

\subsection{An Existing Inexactness Criterion}

In the literature composite gradient mappings have been used in order to derive an inexactness criterion for update step computation within Proximal Newton methods. 
%
Based on an approach from the smooth case, cf. e.g. \cite{Dembo1982}, the authors in \cite{Kanzow2020,Lee2014} took advantage of the composite gradient mapping $G^F_\tau$ to postulate the corresponding estimate which their inexact update steps have to satisfy. In a similar fashion, transferring 
the criterion from the smooth case to our globalization scheme using the damped update steps $\Delta s (\omega)$ from \eqref{eq:dampedstep} yields
\begin{align} \label{eq:gradcrit}
	\norm{G_{\tau}^{\hat F_{x,\omega}}\big( x + \Delta s(\omega) \big)}{X} \leq \eta \norm{G_{\tau}^F (x)}{X}
\end{align}
for some yet to be specified forcing term $\eta > 0$. Here, $\hat F_{x,\omega}$ denotes the modified quadratic model from \eqref{eq:dampedmodel} above. 
This requirement can be understood as a relative error criterion 
for the composite gradient mapping in norm due to the optimality of exactly computed update steps as formulated in Proposition~\ref{prop:gradprop}. 

While in a finite dimensional Euclidean space setting this gradient mapping can be evaluated efficiently due to the diagonal structure of the norm term, in an infinite dimensional setting the computation of this gradient mapping is quite demanding, even as expensive as computing the actual exact update step $\Delta x (\omega)$. 

Consequently, evaluating \eqref{eq:gradcrit} for every iteration within the subproblem solver becomes very costly and thereby immediately eclipses the savings we gain from inexactly computing the update steps. For this reason, we will resort to a different inexactness criterion.

\section{First Inexactness Criterion and Local Convergence Properties} \label{sec:local}

As pointed out beforehand, we do not use an inexactness criterion of the form \eqref{eq:gradcrit} due to its immense computational effort in function space. Instead, we exploit the advantageous properties of the TNNMG subproblem solver by resorting to an actual relative error estimate of the form
\begin{align} \label{eq:relerror}
\norm{\Delta x (\omega) - \Delta s (\omega)}{X} \leq \eta \norm{\Delta x (\omega)}{X}
\end{align}
where $\Delta x (\omega)$ denotes the exact solution of the update step computation subproblem \eqref{eq:dampedstep} and $\Delta s(\omega)$ is the corresponding inexact candidate. The influence of the forcing terms $\eta \geq 0$ on local convergence rates will be investigated in Theorem~\ref{thm:localconv}. 

Before actually stating the local convergence results, let us remark that the inexactness criterion \eqref{eq:relerror} is trivially satisfied by exactly computed update steps and $\eta$ is a measure for the margin for error which we allow in the computation. Additionally, the fact that the inexactly computed update steps $\Delta s (\omega)$ are in our case iterates from the convergent TNNMG subproblem solver implies that sooner or later within the solution process of \eqref{eq:dampedstep} the requirement \eqref{eq:relerror} will be satisfied.

Furthermore, let us comment on the efficient evaluation of this relative error estimate. At first sight, this is not completely obvious since apparently we do not have the exact solution $\Delta x (\omega)$ of the update computation subproblem \eqref{eq:dampedstep} at hand. In order to deal with this issue, we take advantage of the multigrid structure of the iterative subproblem solver which we employ, i.e., the TNNMG method from \cite{Graeser2018}. By $\delta^j$ we denote TNNMG-corrections, let therefore $\Delta s^i(\omega) = \sum_{j=1}^{i}\delta^j$ be an iterate within the inner solver towards the exact solution $\Delta x (\omega)$ and $\theta$ the 'constant' multigrid convergence rate for $\norm{\delta^j}{X} \leq \theta \norm{\delta^{j-1}}{X}$. Simple triangle inequalities thus provide us with
\begin{align*}
\norm{\Delta x (\omega) - \Delta s^i (\omega)}{X} = \sum_{j = i+1}^{\infty} \norm{\delta^j}{X} \leq \norm{\delta^i}{X} \sum_{j = i+1}^\infty \theta^{j-i} = \frac{\theta}{1-\theta} \norm{\delta^i}{X} \, .
\end{align*}
Similarly, for the norm of the exact solution we obtain
\begin{align*}
\norm{\Delta x (\omega)}{X} = \norm{\sum_{j=1}^\infty \delta^j}{X} &= \norm{\Delta s^i(\omega) + \sum_{j = i+1}^\infty \delta^j}{X} \geq \norm{\Delta s^i(\omega)}{X} - \norm{\sum_{j = i+1}^\infty \delta^j}{X} \\
&\geq \norm{\Delta s^i(\omega)}{X} - \frac{\theta}{1-\theta} \norm{\delta^i}{X} \, .
\end{align*}
Combining both of these estimates implies
\begin{align} \label{eq:altrelerror}
\frac{\norm{\Delta x (\omega) - \Delta s^i (\omega)}{X}}{\norm{\Delta x (\omega)}{X}} \leq \frac{\frac{\theta}{1 - \theta}\norm{\delta^i}{X}}{\norm{\Delta s^i(\omega)}{X} - \frac{\theta}{1 - \theta}\norm{\delta^i}{X}} \overset{!}{\leq} \eta
\end{align}
as a sufficient and easy to evaluate alternative inexactness criterion for the relative error estimate \eqref{eq:relerror}. Numerical experiments, which we also incorporated to Section~\ref{sec:numres}, clearly demonstrate that the performed triangle inequalities are sharper than one might have expected. Thus, the evaluation of the alternative criterion from \eqref{eq:altrelerror} comes very close to using the actual relative error for our computations later on.

Another crucial auxiliary result for all of the present treatise is an equivalence estimate between exactly computed update steps which have been damped according to different regularization parameters. It generalizes \cite[Lemma~6]{Poetzl2022} insofar that this result is comprised here in the case of $\omega = 0$.
\begin{lemma} \label{lem:equivomega}
	Let $\Delta x (\omega)$ and $\Delta x (\tilde \omega)$ be exactly computed update steps according to \eqref{eq:dampedstep} with regularization parameters satisfying $\omega > -(\ksum)$ and $\tilde \omega \geq \omega$. Then the following norm estimates hold:
	\begin{align}
		\norm{\Delta x (\omega) - \Delta x (\tilde \omega)}{X} &\leq \frac{\tilde \omega - \omega}{\omega + \ksum}\norm{\Delta x (\tilde \omega)}{X} \label{eq:diffnormest} \\
		\norm{\Delta x (\tilde \omega)}{X} \leq \norm{\Delta x (\omega)}{X} &\leq \frac{\tilde \omega + \ksum}{\omega + \ksum}\norm{\Delta x (\tilde \omega)}{X} \label{eq:equivomega}
	\end{align}
\end{lemma}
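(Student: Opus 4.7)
The plan is to interpret both exact update steps as scaled dual proximal mappings and exploit Proposition~\ref{prop:scndprox}. To this end I set $\tilde g(\delta) \coloneqq g(x + \delta)$, which inherits \eqref{eq:kappa2} with the same modulus $\kzwei$ by translation invariance, and I note that $H_x + \omega\riesz$ and $H_x + \tilde\omega\riesz$ satisfy \eqref{eq:kappa1} with enlarged constants $\keins + \omega$ and $\keins + \tilde\omega$ respectively; the standing assumption $\omega > -(\ksum)$ together with $\tilde\omega \geq \omega$ secures the hypothesis of Proposition~\ref{prop:scndprox} in both cases. First-order optimality for \eqref{eq:dampedstep} identifies $\Delta x(\omega) = \p_{\tilde g}^{H_x + \omega\riesz}(-f'(x))$ and analogously $\Delta x(\tilde\omega) = \p_{\tilde g}^{H_x + \tilde\omega\riesz}(-f'(x))$.

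For \eqref{eq:diffnormest} I apply the first assertion of Proposition~\ref{prop:scndprox} in a cross-testing pattern: once with $u = \Delta x(\omega)$, $\xi = \Delta x(\tilde\omega)$ and once with $u = \Delta x(\tilde\omega)$, $\xi = \Delta x(\omega)$. Writing $d \coloneqq \Delta x(\tilde\omega) - \Delta x(\omega)$ and adding the two resulting inequalities, the $-f'(x)$ contributions and the $\tilde g$-differences cancel, leaving
\begin{align*}
\bigl[-(H_x + \omega\riesz)\Delta x(\omega) + (H_x + \tilde\omega\riesz)\Delta x(\tilde\omega)\bigr](d) \leq -\kzwei \norm{d}{X}^2 \, .
\end{align*}
Rewriting the bracketed dual element as $H_x d + \omega\riesz d + (\tilde\omega - \omega)\riesz\Delta x(\tilde\omega)$, evaluating at $d$, and using \eqref{eq:kappa1} together with the defining property $(\riesz v)(w) = \scpr{v}{w}$ of the Riesz isomorphism yields
\begin{align*}
(\omega + \ksum)\norm{d}{X}^2 \leq (\tilde\omega - \omega)\,\bigl|\scpr{\Delta x(\tilde\omega)}{d}\bigr| \leq (\tilde\omega - \omega)\,\norm{\Delta x(\tilde\omega)}{X} \norm{d}{X}
\end{align*}
via Cauchy--Schwarz, and \eqref{eq:diffnormest} follows upon dividing by $\norm{d}{X}$ (the case $d = 0$ being trivial).

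The lower bound in \eqref{eq:equivomega} is extracted by a standard swap argument exploiting that $\Delta x(\omega)$ and $\Delta x(\tilde\omega)$ minimize $\lambda_{x,\omega}$ and $\lambda_{x,\tilde\omega}$ respectively. Adding the two optimality inequalities $\lambda_{x,\omega}(\Delta x(\omega)) \leq \lambda_{x,\omega}(\Delta x(\tilde\omega))$ and $\lambda_{x,\tilde\omega}(\Delta x(\tilde\omega)) \leq \lambda_{x,\tilde\omega}(\Delta x(\omega))$ causes every term except the two norm regularizers to cancel, leaving $(\tilde\omega - \omega)\bigl(\norm{\Delta x(\tilde\omega)}{X}^2 - \norm{\Delta x(\omega)}{X}^2\bigr) \leq 0$; since $\tilde\omega \geq \omega$, this yields $\norm{\Delta x(\tilde\omega)}{X} \leq \norm{\Delta x(\omega)}{X}$. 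The upper bound of \eqref{eq:equivomega} then drops out of the triangle inequality $\norm{\Delta x(\omega)}{X} \leq \norm{\Delta x(\tilde\omega)}{X} + \norm{d}{X}$ combined with \eqref{eq:diffnormest}. The main technical wrinkle is the Cauchy--Schwarz step in the derivation of \eqref{eq:diffnormest}, as the sign of $\scpr{\Delta x(\tilde\omega)}{d}$ is not available a priori; however, passing through absolute values costs nothing and preserves the exact prefactor $(\tilde\omega - \omega)/(\omega + \ksum)$ claimed by the lemma.
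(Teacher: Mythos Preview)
Your proof is correct and follows essentially the same route as the paper's (which defers to \cite[Lemma~6]{Poetzl2022} via proximal representations and Proposition~\ref{prop:scndprox}). The only cosmetic difference is that for \eqref{eq:diffnormest} one can bypass the cross-testing step by observing that $\Delta x(\tilde\omega) = \p_{\tilde g}^{H_x + \omega\riesz}\bigl(-f'(x) - (\tilde\omega - \omega)\riesz\Delta x(\tilde\omega)\bigr)$ and applying the Lipschitz part of Proposition~\ref{prop:scndprox} directly; your variational-inequality argument is precisely how that Lipschitz bound is proved, so the two are equivalent.
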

\begin{proof}
	The proof follows the exact same lines as the one of Lemma~6 in \cite{Poetzl2022} with the respective proximal representations of the exact steps used here.
\end{proof}

With the relative error inexactness criterion \eqref{eq:relerror} as well as the auxiliary results concerning regularized composite gradient mappings from Section~\ref{sec:gradientmap} and norm estimates from Lemma~\ref{lem:equivomega} at hand, we can now tackle the proof of the following local acceleration result.
\begin{theorem} \label{thm:localconv}
	Suppose that the semi-smoothness assumption \eqref{eq:semismooth} together with $\ksum > 0$ holds near an optimal solution $x^* \in X$ of \eqref{eq:prob}. Then, the inexact Proximal Newton method with update steps computed according to \eqref{eq:dampedstep} at $x_k \in X$ close to $x^*$ with the inexactness criterion \eqref{eq:relerror} for $\eta_k \geq 0$ exhibits the following local convergence behavior:
	\begin{itemize}
		\item[a)] The sequence of iterates locally converges linearly if $\omega_k$ and $\eta_k$ are sufficiently small, i.e., if there exists some constant $0 < \Theta < 1$ and $k_0 \in \NN$ such that for all $k \geq k_0$ the following estimate holds:
		\begin{align} \label{eq:linearconv}
			\frac{1}{\omega_k + \ksum}\big[ \big(\omega_k+ \norm{H_{x_k}}{\Lin} +\kzwei \big)\eta_k + \omega_k \big] < \Theta \, .
		\end{align}
		\item[b)] The sequence of iterates locally converges superlinearly in case both $\omega_k$ and $\eta_k$ converge to zero.
	\end{itemize}
\end{theorem}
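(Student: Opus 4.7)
The plan is to combine the two-sided bounds on differences of regularized composite gradient mappings from Lemma~\ref{lem:GFxyz} with the vanishing property at exact updates from Proposition~\ref{prop:gradprop}, the mismatch estimate at $x_*$ from Lemma~\ref{lem:GFGFk}, and the damping-equivalence from Lemma~\ref{lem:equivomega} to establish a recursion on $a_k := \norm{x_k - x_*}{X}$.

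I fix the gradient-mapping regularization parameter as prescribed in Lemma~\ref{lem:GFxyz}, namely $\tau_k := \omega_k + \frac{1}{2}(\norm{H_{x_k}}{\Lin} + \keins)$. A direct computation from the definition of $\mathcal H$ yields the clean identities
\begin{align*}
\tau_k(1-\mathcal H) = \frac{\tau_k(\omega_k + \ksum)}{\tau_k + \kzwei}, \qquad \tau_k(1+\mathcal H) = \frac{\tau_k(\omega_k + \norm{H_{x_k}}{\Lin} + \kzwei)}{\tau_k + \kzwei},
\end{align*}
whose ratio drives the eventual contraction factor.

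Applying the lower bound of Lemma~\ref{lem:GFxyz} with $y = x_*$ and $z = x_{k+1} = x_k + \Delta s_k(\omega_k)$ gives $\tau_k(1-\mathcal H)\, a_{k+1} \leq \norm{G_{\tau_k}^{\hat F_{x_k,\omega_k}}(x_*) - G_{\tau_k}^{\hat F_{x_k,\omega_k}}(x_{k+1})}{X}$. Inserting the zero $G_{\tau_k}^{\hat F_{x_k,\omega_k}}(x_k + \Delta x_k(\omega_k)) = 0$ supplied by Proposition~\ref{prop:gradprop} and splitting by the triangle inequality produces two summands. The first, after further inserting $G_{\tau_k}^F(x_*)=0$ and invoking Lemma~\ref{lem:GFGFk}, is bounded by $o(a_k) + \frac{\tau_k\omega_k}{\tau_k + \kzwei}\, a_k$; the second, by the upper bound of Lemma~\ref{lem:GFxyz} together with the inexactness criterion~\eqref{eq:relerror}, is bounded by $\tau_k(1+\mathcal H)\eta_k\, \norm{\Delta x_k(\omega_k)}{X}$. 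Division by $\tau_k(1-\mathcal H)$ and substitution of the identities above yield
\begin{align*}
a_{k+1} \leq \frac{\omega_k}{\omega_k + \ksum}\, a_k + \frac{(\omega_k + \norm{H_{x_k}}{\Lin} + \kzwei)\eta_k}{\omega_k + \ksum}\, \norm{\Delta x_k(\omega_k)}{X} + \frac{\tau_k + \kzwei}{\tau_k(\omega_k + \ksum)}\, o(a_k).
\end{align*}

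The remaining ingredient for matching \eqref{eq:linearconv} is the sharp asymptotic control $\norm{\Delta x_k(\omega_k)}{X} \leq (1 + o(1))\, a_k$. Lemma~\ref{lem:equivomega} with $\omega=0$ and $\tilde\omega = \omega_k$ (admissible because $\ksum>0$) reduces this to bounding $\norm{\Delta x_k(0)}{X}$. Rerunning the gradient-mapping argument above in the undamped regime $\omega=0$ with $y = x_k + \Delta x_k(0)$ and $z = x_*$ gives $\norm{x_k + \Delta x_k(0) - x_*}{X} \leq o(a_k)$, since the damping term in Lemma~\ref{lem:GFGFk} vanishes, so a triangle inequality yields $\norm{\Delta x_k(0)}{X} \leq a_k + o(a_k)$.

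For statement (a), hypothesis \eqref{eq:linearconv} forces the coefficient of $a_k$ in the main estimate to be bounded by $\Theta < 1$ up to the vanishing higher-order remainder, whence $a_{k+1} \leq \Theta' a_k$ for any fixed $\Theta' \in (\Theta,1)$ and all sufficiently large $k$, i.e.\ local linear convergence. For statement (b), if both $\omega_k \to 0$ and $\eta_k \to 0$, each of the two non-remainder coefficients of $a_k$ tends to zero, so $a_{k+1} = o(a_k)$ and superlinear convergence follows. The main obstacle is the sharp control of $\norm{\Delta x_k(\omega_k)}{X}$ by $a_k$: a bare triangle inequality only delivers a prefactor up to two, and it is the combination of Lemma~\ref{lem:equivomega} with the undamped variant of the gradient-mapping argument that yields the asymptotically sharp prefactor needed to match the clean coefficients appearing in \eqref{eq:linearconv}.
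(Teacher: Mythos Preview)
Your proof is correct and follows essentially the same route as the paper's: the same choice of $\tau_k$, the same lower-bound-then-triangle-split via Lemma~\ref{lem:GFxyz}, and the same use of Proposition~\ref{prop:gradprop} and Lemma~\ref{lem:GFGFk} for the two summands, leading to the identical recursion \eqref{eq:localconv}. The only cosmetic difference is that where the paper bounds $\norm{\Delta x_k}{X}$ by citing the undamped superlinear convergence result \cite[Theorem~1]{Poetzl2022}, you re-derive that bound in place by rerunning the gradient-mapping argument with $\omega=0$; this is the same content unfolded.
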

\begin{proof}
	For the sake of simplicity, we will omit the sequence indices of all quantities here and denote $x = x_k$, $\omega = \omega_k$ and $\eta = \eta_k$ for the current iterate, regularization parameter and forcing term. For the next iterate, we write $x_+(\omega) = x_{k+1}(\omega)$ and $H_x = H_{x_k}$ stands for the current second order bilinear form. 
	
	For what follows, we fix $\tau \coloneqq \omega + \frac{1}{2}\big( \norm{H_x}{\Lin}+\keins \big)$ for the gradient mapping regularization parameter which allows us to take advantage of the auxiliary estimates deduced in Lemma~\ref{lem:GFxyz}. Under these circumstances, the first part of \eqref{eq:GFxyz} from Lemma~\ref{lem:GFxyz} provides us with
	\begin{align} \label{eq:localstart}
		\begin{split}
			\norm{x_+(\omega) - x_*}{X} &\leq \frac{1}{\tau \big( 1 - \mathcal H \big)}\norm{G_{\tau}^{\hat F_{x,\omega}}(x + \Delta s (\omega)) - G_{\tau}^{\hat F_{x,\omega}}(x_*)}{X} \\
			&\leq \frac{1}{\tau \big( 1 - \mathcal H \big)} \bigg[\norm{G_{\tau}^{\hat F_{x,\omega}}(x + \Delta s (\omega))}{X} + \norm{G_{\tau}^{\hat F_{x,\omega}}(x_*)}{X} \bigg]
		\end{split}
	\end{align}
	where we abbreviated the constant $\mathcal H \coloneqq \frac{\norm{H_x}{\Lin}-\keins}{2(\tau + \kzwei)}$. 
	As a next step, we take a look at the first norm term in brackets in \eqref{eq:localstart}. We use \eqref{eq:GFdeltaxzero} from Proposition~\ref{prop:gradprop} together with the second part of \eqref{eq:GFxyz} from Lemma~\ref{lem:GFxyz} for $y \coloneqq x+\Delta s (\omega)$ and $z \coloneqq x+ \Delta x (\omega)$ in order to obtain the following estimate:
	\begin{align*}
		\norm{G_\tau^{\hat F_{x,\omega}}\big( x + \Delta s (\omega) \big)}{X} &= \norm{G_\tau^{\hat F_{x,\omega}}\big( x + \Delta s (\omega) \big) - G_\tau^{\hat F_{x,\omega}}\big( x + \Delta x (\omega) \big)}{X} \\
		&\leq \tau \big( 1 + \mathcal H \big) \norm{\Delta x (\omega) - \Delta s (\omega)}{X} \, .
	\end{align*}
	For the ensuing norm difference we take advantage of the relative error estimate inexactness criterion \eqref{eq:relerror} together with the monotonicity of update step norms concerning the damping parameter $\omega$ as in Lemma~\ref{lem:equivomega}. Additionally, the superlinear convergence for full update steps close to optimal solutions (cf. \cite[Theorem~1]{Poetzl2022}) is important here:
	\begin{align} \label{eq:quanteta}
		\norm{\Delta x (\omega) - \Delta s (\omega)}{X} \leq \eta \norm{\Delta x(\omega)}{X} \leq \eta \norm{\Delta x}{X} \leq o\big( \norm{x-x_*}{X} \big) + \eta \norm{x-x_*}{X} \, .
	\end{align}
	By the optimality of $x_*$ together with Lemma~\ref{lem:GFGFk}, for the second term in brackets in \eqref{eq:localstart} we have
	\begin{align} \label{eq:quantomega}
		\norm{G_{\tau}^{\hat F_{x,\omega}}(x_*)}{X} = \norm{G_{\tau}^{\hat F_{x,\omega}}(x_*) - G_{\tau}^{F}(x_*)}{X} \leq o\big( \norm{x-x_*}{X} \big) + \frac{\omega \tau}{\tau + \kzwei}\norm{x-x_*}{X} \, .
	\end{align}
	The estimates \eqref{eq:quanteta} and \eqref{eq:quantomega} suffice to quantify the influence of either inexactness or damping on local convergence rates of our algorithm. Inserting both of them into \eqref{eq:localstart} above yields
	\begin{align} \label{eq:localfrac}
		\norm{x_+(\omega) - x_*}{X} &\leq \frac{(1 + \mathcal H)\eta + \frac{\omega}{\tau + \kzwei} }{1 - \mathcal H} \norm{x-x_*}{X} + o\big( \norm{x-x_*}{X} \big) \, .
	\end{align}
	All that remains to do now is simplify the rather complicated prefactor term within the estimate above. We expand the fraction by $2(\tau+\kzwei)$ and use that by the definition of $\tau$ we have
	\begin{align*}
		2(\tau+\kzwei) = 2(\omega + \kzwei) + \norm{H_x}{\Lin} + \keins \, .
	\end{align*}
	This provides us with
	\begin{align*}
		\frac{(1 + \mathcal H)\eta + \frac{\omega}{\tau + \kzwei} }{1 - \mathcal H} &= \frac{\big(2(\tau + \kzwei) + \norm{H_x}{\Lin}-\keins\big)\eta + 2\omega}{2(\tau + \kzwei) - \norm{H_x}{\Lin} + \keins} = \frac{\big( \omega + \norm{H_x}{\Lin} + \kzwei \big)\eta + \omega}{\omega + \ksum}
	\end{align*}
	Inserting this identity to \eqref{eq:localfrac} now directly yields
	\begin{align} \label{eq:localconv}
		\norm{x_+(\omega) - x_*}{X} \leq \frac{1}{\omega + \ksum} \big[ \big( \omega + \norm{H_x}{\Lin} + \kzwei \big)\eta + \omega \big] \norm{x-x_*}{X} + o\big( \norm{x-x_*}{X} \big) \, .
	\end{align}
	Now, both of the asserted cases for local convergence behavior are an immediate consequence of \eqref{eq:localconv}.
\end{proof}
\begin{remark}
    The estimate \eqref{eq:linearconv} yields a couple of algorithmically relevant insights.
    First, the linear convergence factor $\Theta$ can only be small, if both $\omega_k$ and $\eta_k$ are small. Hence, computing steps very accurately does only pay off, if $\omega_k$ is very small. We will see in Section~\ref{sec:transition} that close to optimal solutions arbitrarily small regularization parameters $\omega_k \approx 0$ can indeed be used.
    
    Second, if we neglect $\omega_k \approx 0$, then \eqref{eq:linearconv} simplifies to
%
	\begin{align*}
		\frac{\norm{H_{x_k}}{\Lin} + \kzwei}{\ksum} \eta_k \leq \Theta,
	\end{align*}
	where the prefactor on the left hand side can be interpreted as a local condition number of the problem. Indeed, for $\kzwei=0$ it coincides with the condition number of $H_x$ relative to $\|\cdot\|_X$. Thus, to achieve a given rate of local convergence, $\eta_k$ has to be chosen the tighter the higher the condition number. This underlines the necessity of an adequate choice of function space $X$ and norm $\|\cdot\|_X$. 
\end{remark}


Additionally, we were able to extend the local convergence result from \cite[Theorem~1]{Poetzl2022} insofar that we quantified the influence of damping update steps on (local) convergence rates. We are now also aware of more insightful criteria for linear or superlinear convergence of our method respectively. This helps us understand the process of local convergence of the (inexact) Proximal Newton method to an even greater extent.

\section{Global Convergence Properties} \label{sec:global}
Now that we have clarified the local convergence properties of our inexact Proximal Newton method depending on the forcing terms in criterion \eqref{eq:gradcrit}, we want to take into consideration whether the globalization scheme via the additional norm term in \eqref{eq:dampedstep} still fulfills its purpose and yields some global convergence results. 

\subsection{Cauchy Decrease Steps and the Subgradient Model} \label{subs:cauchy}

In order to achieve such a result, we will introduce a second crucial criterion which the inexactly computed update steps $\Delta s_k (\omega_k)$ have to satisfy in order to be admissible for our method. It can be viewed as an adopted strategy from smooth trust region methods where rather cheap so-called Cauchy decrease steps are used to measure functional value descent for the actual update steps, cf. e.g. \cite[Chapter~6]{Conn2000}.

There are several conceivable ways to define and compute such comparative Cauchy decrease steps. A canonical choice would be a simple Proximal Gradient step, i.e., the minimizer of the regularized linear model
\begin{align*}
\lambda_{x,\hat \omega}^C (\delta x) \coloneqq f'(x)\delta x + \frac{\hat \omega}{2}\norm{\delta x}{X}^2 + g(x + \delta x) - g(x) \, , \, \delta x \in X \, .
\end{align*}
As was the problem with evaluating the gradient mapping for our first inexactness criterion, also this procedure is as expensive as computing the exact Proximal Newton step right away in our general Hilbert space setting. 
Thus, the idea arises to find some comparative update step which we can compute with marginal effort in order to measure its functional value descent and then compare it to our inexact update step.

To this end, we define the subgradient model descent of $F$ around $x \in X$ with respect to $\mu \in \partial_F g(x)$ and regularization parameter $\hat \omega > 0$ by 
\begin{align} \label{eq:mumodel}
\lambda_{x,\hat \omega}^\mu (\delta x) \coloneqq f'(x)\delta x + \mu \, \delta x + \frac{\hat \omega}{2} \norm{\delta x}{X}^2 \, , \, \delta x \in X \, ,
\end{align}
and we refer to the respective minimizer  
\begin{align} \label{eq:mumin}
\Delta x^\mu (\hat \omega) \coloneqq \argmin_{\delta x \in X} \lambda_{x,\hat \omega}^\mu (\delta x)
\end{align}
as the corresponding subgradient step. Before introducing the second inexactness criterion which makes use of the above model and step, we will establish an analytical connection between \eqref{eq:mumodel} and our initially defined regularized second order decrease model $\lambda_{x,\omega}$ from \eqref{eq:dampedmodel}. To this end, we remember that the regularization parameter $\omega \geq 0$ is generally chosen such that the modified non-smooth part
\begin{align*}
\tilde g \colon X \to \RR \quad , \quad \tilde g (x) \coloneqq g(x) + \frac 12 \big( H_x + \omega \riesz \big)(x)^2
\end{align*}
is convex and thus the subproblem \eqref{eq:dampedstep} allows for a unique solution. Consequently, the characterization of the convex subdifferential $\partial \tilde g (x)$ yields that for any $\tilde \mu = \mu + (H_x + \omega \riesz) x \in \partial \tilde g (x)$ with $\mu \in \partial_F g (x)$ we have that
\begin{align*}
\tilde g (x + \delta x) \geq \tilde g (x) + \tilde \mu \, \delta x \quad \text{ and thus } \quad g(x + \delta x) - g(x) + \frac 12 H_x(\delta x)^2 + \frac{\omega}{2}\norm{\delta x}{X}^2 \geq \mu \, \delta x
\end{align*}
holds for any $\delta x \in X$ and  $\mu \in \partial_F g (x)$. We immediately obtain that
\begin{align} \label{eq:lambdamulambda}
\begin{split}
\lambda_{x,\hat \omega}^\mu (\delta x) &= f'(x)\delta x + \frac{\hat \omega}{2} \norm{\delta x}{X}^2 + \mu \delta x \\
&\leq f'(x)\delta x + \frac 12 H_x(\delta x)^2 + \frac{\hat \omega + \omega}{2}\norm{\delta x}{X}^2 + g(x + \delta x) - g(x) = \lambda_{x,\hat \omega + \omega} (\delta x)
\end{split}
\end{align}
is true for any $\delta x \in X$. In particular, this estimate apparently also holds for the respective minima of the decrease models of the composite objective function. For that reason, from \eqref{eq:lambdamulambda} we obtain
\begin{align} \label{eq:lambdamunorm}
\lambda_{x,\hat \omega}^\mu \big( \Delta x^\mu (\hat \omega) \big) \leq \lambda_{x,\hat \omega + \omega} \big(\Delta x (\hat \omega + \omega) \big) \leq - \frac 12 \big( \hat \omega + \omega + \ksum \big) \norm{\Delta x (\hat \omega + \omega)}{X}^2
\end{align}
for any $\hat \omega > 0$ where the last estimate constitutes a result from the exact case in \cite[Eq.(19)]{Poetzl2022} and will give us norm-like descent in the objective functional later on. Obviously, we now want to link this norm-like decrease within the subgradient model to the regularized second order decrease model $\lambda_{x,\omega} \big( \Delta s (\omega) \big)$ for our inexactly computed update step $\Delta s (\omega)$ and lastly to the direct descent within the objective functional $F$.

\subsection{Second Inexactness Criterion and Efficient Evaluation}

We will establish the first one of these connections via the actual second inexactness criterion which will thus also be checked within our algorithm and implementation. For this purpose, it is sufficient if an inexactly computed update step $\Delta s (\omega)$ satisfies the estimate
\begin{align} \label{eq:subgraddesc}
\lambda_{x,\omega} \big( \Delta s (\omega) \big) \leq \lambda_{x,\tilde \omega}^\mu \big( \Delta x^\mu (\tilde \omega) \big) \quad \text{for some} \quad \tilde \omega < \omax
\end{align} 
where the upper bound $\omax > 0$ is an algorithmic parameter yet to be specified. This inequality now constitutes our formal second inexactness criterion which we will also refer to as the \emph{subgradient inexactness criterion}.

Let us shortly elaborate on the efficient evaluation of this estimate and from there derive the actual implementation of the criterion: The solution property of $\Delta x^\mu (\tilde \omega)$ provides us with first order conditions for the corresponding minimization problem in the form of
\begin{align*}
0 = f'(x) + \mu + \hat \omega \riesz \Delta x^\mu (\tilde \omega)
\end{align*}
and thus $\Delta x^\mu (\tilde \omega) = - (\tilde \omega \riesz)^{-1} \big( f'(x) + \mu \big)$. For a given value of $\lambda_{x,\omega}\big( \Delta s (\omega) \big)$, i.e., descent within the regularized second order model with an inexactly computed update step, we can thus theoretically determine $\tilde \omega$ such that \eqref{eq:subgraddesc} is satisfied with equality. This can be seen as follows:
\begin{align} \label{eq:lambdaequality}
\begin{split}
\lambda_{x,\omega}\big( \Delta s (\omega) \big) &\overset{!}{=} \lambda_{x,\tilde \omega}^\mu \big( \Delta x^\mu (\tilde \omega) \big) = \big( f'(x) + \mu \big)\Delta x^\mu (\tilde \omega) + \frac{\tilde \omega}{2}\norm{\Delta x^\mu (\tilde \omega)}{X}^2 \\ 
&= \big( f'(x) + \mu \big) \big[  - (\tilde \omega \riesz)^{-1} \big( f'(x) + \mu \big)\big] + \frac{\tilde \omega}{2}\norm{- (\tilde \omega \riesz)^{-1} \big( f'(x) + \mu \big)}{X}^2 \\
&= - \frac{1}{2 \tilde \omega} \norm{f'(x) + \mu}{X^*}^2
\end{split}
\end{align}
which provides us with the theoretical value
\begin{align} \label{eq:omegatilde}
\tilde \omega = - \frac{\norm{f'(x) + \mu}{X^*}^2}{2 \lambda_{x,\omega} \big( \Delta s (\omega) \big)} \overset{!}{<} \omax
\end{align}
for the regularization parameter within the subgradient minimization problem \eqref{eq:mumin}. This quantity should remain bounded since otherwise the convergence of $\norm{\Delta x (\tilde \omega + \omega+1)}{X}^2$ to zero later will not provide us with global convergence results. Thus, as also pointed out in \eqref{eq:omegatilde}, we establish a sufficient estimate for our subgradient inexactness criterion \eqref{eq:subgraddesc} by demanding boundedness of $\tilde \omega$ from above by $\omax$. Note here that - as can be seen in \eqref{eq:lambdaequality} - the value for $\lambda_{x,\tilde \omega}^\mu \big( \Delta x^\mu (\tilde \omega) \big)$ increases as $\tilde \omega$ does.
Since globalization mechanisms in general should only provide worst case estimates and not slow down the convergence of our algorithm, we want the subgradient inexactness criterion to only interfere with update step computation on rare occasions and thus choose $\omax$ very large.

The dual norm occurring in the numerator of \eqref{eq:omegatilde} is computed as follows: we compute the minimizer of the linear subgradient model $\Delta x^\mu (1) \in X$ from \eqref{eq:mumin} and afterwards evaluate the linear functional $f'(x)+\mu \in X^*$ there. Here, the Fr\'echet-subdifferential element $\mu \in \partial_F g(x)$ is chosen such that the norm $\norm{f'(x) + \mu}{X^*}$ is as small as possible. Obviously, this depends on the specific minimization problem at hand but due to the non-smooth nature of $g$ it is often possible to exploit the set-valued subdifferential for this purpose.

Let us add some remarks concerning satisfiability of the subgradient inexactness criterion: As mentioned above, the freedom of choice of $\mu$ within $\partial_F g(x)$ opens up possibilities to decrease the value of $\norm{f'(x) + \mu}{X^*}$ right away. Additionally, considering the exact case for update step computation is very insightful in order to see that the criterion will be fulfilled by late iterations of the inner solver. For now, we interpret $\norm{f'(x) + \mu}{X^*} \approx \mathrm{dist}\big( \partial_F F (x) , 0 \big)$, i.e., we assume $\mu \in \partial_F g(x)$ to be chosen (nearly) optimally for our purpose of finding solutions of \eqref{eq:prob}.

\begin{proposition} \label{prop:secondinexsatis}
	Assume that there exists some constant $C > 0$ such that 
	\begin{align*}
	\norm{f'(x) + \mu}{X^*} \leq C \mathrm{dist}\big( \partial_F F (x) , 0 \big)
	\end{align*}
	holds at some $x \in X$ for $\mu \in \partial_F g(x)$. Then, the subgradient inexactness criterion \eqref{eq:subgraddesc} is eventually satisfied by iterates $\Delta s (\omega)$ of convergent solvers for the subproblem \eqref{eq:dampedstep} in case
	\begin{align} \label{eq:omaxbound}
	\omax > \frac{C^2\big(\omega + L_f + \norm{H_{x}}{\Lin}\big)^2}{\omega + \ksum}
	\end{align}
	holds for the upper bound $\omax$ from \eqref{eq:subgraddesc}.
\end{proposition}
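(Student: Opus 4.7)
The plan is to use the explicit identity \eqref{eq:lambdaequality} to convert the criterion \eqref{eq:subgraddesc} into a quantitative bound on the implicit regulariser $\tilde\omega$ and then show that this bound falls strictly below $\omax$ in the limit of the inner iteration.

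First, I would pass to the limit in the inner solver. Convergence of the subproblem solver yields $\Delta s(\omega)\to \Delta x(\omega)$ in $X$, and lower semi-continuity of $g$ together with continuity of the smooth and quadratic parts of $\lambda_{x,\omega}$ gives $\lambda_{x,\omega}(\Delta s(\omega))\to \lambda_{x,\omega}(\Delta x(\omega))$. By \eqref{eq:lambdaequality}, the criterion \eqref{eq:subgraddesc} is satisfied with equality for
\[ \tilde\omega_s \coloneqq -\frac{\norm{f'(x)+\mu}{X^*}^2}{2\,\lambda_{x,\omega}(\Delta s(\omega))}, \]
and $\tilde\omega_s$ converges to $\tilde\omega_\infty \coloneqq -\norm{f'(x)+\mu}{X^*}^2/\bigl(2\,\lambda_{x,\omega}(\Delta x(\omega))\bigr)$. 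By continuity of $\tilde\omega_s$ in the inner iterate, it suffices to verify $\tilde\omega_\infty < \omax$, since then the strict inequality will persist for sufficiently late inner iterations.

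Second, I would bound $\tilde\omega_\infty$ from above. The denominator is handled through the exact-step decrease estimate \eqref{eq:lambdamunorm} taken at $\hat\omega=0$, which yields $-2\lambda_{x,\omega}(\Delta x(\omega))\geq (\omega + \ksum)\norm{\Delta x(\omega)}{X}^2$. The numerator is bounded by the standing assumption as $\norm{f'(x)+\mu}{X^*}\leq C\,\mathrm{dist}(\partial_F F(x),0)$, so everything hinges on a residual-type estimate of the form $\mathrm{dist}(\partial_F F(x),0)\leq (\omega + L_f + \norm{H_x}{\Lin})\norm{\Delta x(\omega)}{X}$. The raw ingredient for this is the first-order optimality of $\Delta x(\omega)$, giving $-f'(x) - (H_x + \omega\riesz)\Delta x(\omega) \in \partial_F g(x + \Delta x(\omega))$; adding $f'(x + \Delta x(\omega))$ and using \eqref{eq:Lipschitz} and \eqref{eq:Hxbound} produces an element of $\partial_F F(x + \Delta x(\omega))$ of norm at most $(\omega + L_f + \norm{H_x}{\Lin})\norm{\Delta x(\omega)}{X}$. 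The real obstacle is transferring this estimate from $x + \Delta x(\omega)$ to $x$, since the Fréchet subdifferential of $g$ is not continuous in general; navigating this requires exploiting the convexity reservoir encoded in $\ksum > 0$, combined with a subgradient monotonicity or selection argument, to exhibit a $\nu \in \partial_F g(x)$ that inherits the bound up to the constants stated.

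Finally, stringing the three estimates together yields
\[ \tilde\omega_\infty \leq \frac{C^2 \bigl(\omega + L_f + \norm{H_x}{\Lin}\bigr)^2}{\omega + \ksum} < \omax \]
by \eqref{eq:omaxbound}. The strict inequality carries over to $\tilde\omega_s$ for all sufficiently late inner iterations, so $\Delta s(\omega)$ eventually realises \eqref{eq:subgraddesc}. The pivotal and most delicate step in this programme is unambiguously the residual bound on $\mathrm{dist}(\partial_F F(x),0)$: unlike in the smooth case, the lack of continuity of $\partial_F g$ means that passing the subgradient selection from the trial point $x + \Delta x(\omega)$ back to $x$ is not automatic and must be engineered from the available convexity.
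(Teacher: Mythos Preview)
Your outline matches the paper's proof: bound $\tilde\omega$ via \eqref{eq:omegatilde}, control the denominator through the exact decrease estimate $-2\lambda_{x,\omega}(\Delta x(\omega)) \geq (\omega+\ksum)\norm{\Delta x(\omega)}{X}^2$ from \cite[Eq.~(19)]{Poetzl2022}, control the numerator via the hypothesis on $\mu$ together with a residual bound on $\mathrm{dist}(\partial_F F(x),0)$, and then pass to the limit in the inner iteration.

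The one place you diverge is precisely the residual bound. The paper does not rederive it from the optimality conditions of \eqref{eq:dampedstep}; it simply cites \cite[Theorem~2]{Poetzl2022}, which already furnishes
\[
\mathrm{dist}\big(\partial_F F(x),0\big) \leq \big(L_f + \norm{H_x}{\Lin} + \omega\big)\norm{\Delta x(\omega)}{X}
\]
at the current iterate $x$. Your route through first-order optimality of $\Delta x(\omega)$ naturally produces a subgradient at $x+\Delta x(\omega)$ rather than at $x$, and the ``engineering from convexity'' you flag as the pivotal obstacle is not needed here: the estimate at $x$ is an established result of the companion paper, so what you identify as the most delicate step is in fact a one-line citation in the paper's argument.
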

\begin{proof}
	According to global convergence arguments in \cite[Theorem~2]{Poetzl2022} together with the assumed existence of $C>0$ above, we can estimate
	\begin{align*}
	\norm{f'(x) + \mu}{X^*} \leq C \mathrm{dist}\big( \partial_F F (x) , 0 \big) \leq C\big(L_f + \norm{H_{x}}{\Lin} + \omega\big)\norm{\Delta x (\omega)}{X}
	\end{align*}
	for the exactly computed update step $\Delta x (\omega)$. Additionally, from \cite[Eq.(19)]{Poetzl2022} we infer that
	\begin{align*}
	\lambda_{x,\omega}(\Delta x(\omega)) \leq - \frac12 \big( \omega +  \ksum \big)\norm{\Delta x(\omega)}{X}^2 \; \Leftrightarrow \; -2\lambda_{x,\omega}(\Delta x(\omega)) \geq \big( \omega +  \ksum \big)\norm{\Delta x(\omega)}{X}^2
	\end{align*}
	is true in this scenario and we consequently obtain
	\begin{align} \label{eq:exactomegatilde}
	\tilde \omega = - \frac{\norm{f'(x) + \mu}{X^*}^2}{2 \lambda_{x,\omega} \big( \Delta x (\omega) \big)} \leq \frac{C^2\big(\omega + L_f + \norm{H_{x}}{\Lin}\big)^2}{\omega + \ksum} < \infty \, .
	\end{align}
	Here, the convergence of the subproblem solver in the form that the respective objective value $\lambda_{x,\omega} \big( \Delta s (\omega)\big)$ tends to $\lambda_{x,\omega} \big( \Delta x (\omega)\big)$ from above comes into play. Thus, we can summarize
	\begin{align*}
	\tilde \omega = - \frac{\norm{f'(x) + \mu}{X^*}^2}{2 \lambda_{x,\omega} \big( \Delta s (\omega) \big)} \underset{>}{\to} - \frac{\norm{f'(x) + \mu}{X^*}^2}{2 \lambda_{s,\omega} \big( \Delta x (\omega) \big)} \leq \frac{C^2\big(\omega + L_f + \norm{H_{x}}{\Lin}\big)^2}{\omega + \ksum}
	\end{align*}
	for the theoretical value $\tilde \omega$ from \eqref{eq:omegatilde}. If now in particular the assumed estimate for the upper bound $\omax$ holds, the assertion directly follows.
\end{proof}
\begin{remark}
	The bound in \eqref{eq:omaxbound} in particular remains finite in both limits $\omega \to 0$ and $x \to \bar x$ for any stationary point $\bar x \in X$ of problem \eqref{eq:prob} near which  $\ksum > 0$ holds.
\end{remark}

The algorithmic strategy behind the subgradient inexactness criterion can now be summarized as follows: For the present iterate of the outer loop $x \in X$, we solve the linearized problem \eqref{eq:mumin} for the computation of the dual norm $\norm{f'(x) + \mu}{X^*}$ and initiate the inner loop in order to determine the next inexact update step.
At every iterate $\Delta s (\omega)$ of the inner solver for subproblem \eqref{eq:dampedstep} we compute the corresponding subgradient regularization parameter $\tilde \omega$ from \eqref{eq:omegatilde} and check $\tilde \omega < \omax$. As a consequence of Proposition~\ref{prop:secondinexsatis}, either $\omax$ is chosen large enough and we will eventually achieve $\tilde \omega < \omax$ for some inexact step or we will compute an exact update step $\Delta x (\omega)$ which on its own provides us with global convergence of the sequence of iterates as presented in \cite[Section~4]{Poetzl2022}.

\subsection{Summary of Inexactness Criteria}

With both of our inexactness criteria at hand, let us shortly reflect on their computational effort and compare it to possible alternatives: For the relative error criterion \eqref{eq:relerror} in its form \eqref{eq:altrelerror} only the evaluation of the fraction and its comparison to the forcing term is necessary since all occurring norms are already present within the subproblem solver. The subgradient inexactness criterion as described before requires the solution of the quadratic minimization problem \eqref{eq:mumin} once per outer iteration of our method together with the evaluation of the quadratic model $\lambda_{x,\omega}\big( \Delta s (\omega) \big)$ at each inner iteration which is a cheap operation.

For comparative algorithms from literature, cf. \cite{Kanzow2020,Byrd2015,Lee2014}, the gradient-like inexactness criterion \eqref{eq:gradcrit} has to be assessed at every inner iteration together with one comparison of the second order decrease model value with its base value for $\delta x = 0$. As mentioned before, the former operation is very costly for non-diagonal function space norms, particularly in comparison to solving a linearized problem once per outer iteration. This emphasizes both the necessity and the benefit of our adjustments to existing inexactness criteria. The summarized procedure can be retraced in the scheme of Algorithm~\ref{alg:inexproxnewton}.

\subsection{Sufficient Decrease Criterion and Global Convergence}

For global convergence in the case of inexactly computed update steps with the criteria introduced above we still have to carry out some more deliberations. The last missing ingredient in our recipe for norm-like descent within the composite objective functional is given by a sufficient decrease criterion which we have also used in the exact scenario in \cite[Eq.(18)]{Poetzl2022}. We say that an (inexactly computed) update step $\Delta s (\omega)$ is admissible for sufficient decrease if for some prescribed $\gamma \in ]0,1[$ the estimate
\begin{align} \label{eq:suffdes}
F\big(x+\Delta s(\omega)\big) - F(x) \leq \gamma \lambda_{x,\omega} \big( \Delta s (\omega) \big)
\end{align}
holds. Now, before justifying that \eqref{eq:suffdes} holds for sufficiently large values of the regularization parameter $\omega$, let us combine estimates \eqref{eq:suffdes}, \eqref{eq:subgraddesc}, the monotonicity of $\lambda_{x,\tilde \omega}^\mu \big( \Delta x^\mu (\tilde \omega) \big)$ with respect to $\tilde \omega$ as well as \eqref{eq:lambdamunorm} from above and thus recognize that we obtain
\begin{align} \label{eq:combined}
	\begin{split}
		F\big(x + \Delta s (\omega) \big) - F(x) &\leq \gamma \lambda_{x,\omega} \big( \Delta s (\omega) \big) =  \gamma \lambda_{x,\tilde \omega}^\mu \big( \Delta x^\mu (\tilde \omega) \big) \leq \gamma \lambda_{x,\tilde \omega + 1}^\mu \big( \Delta x^\mu (\tilde \omega + 1) \big) \\
		&\leq -\frac {(\tilde \omega + \omega + 1 + \ksum) \gamma}2 \norm{\Delta x (\tilde \omega + \omega + 1)}{X}^2 \\
		&\leq -\frac {\gamma} 2 \norm{\Delta x (\omax + \omega + 1)}{X}^2 \, .
	\end{split}
\end{align} 
Note that we additionally used $\tilde \omega \geq 0$ and $\omega + \ksum \geq 0$ as well as $\tilde \omega < \omax$ together with the equivalence result from Lemma~\ref{lem:equivomega}. 

The following lemma ensures the satisfiability of the sufficient decrease criterion \eqref{eq:suffdes} as soon as $\omega$ is large enough.
\begin{lemma} \label{lem:suffdesomega}
	Let $\omega$ denote the regularization parameter ensuring the unique solvability of the update step subproblem \eqref{eq:dampedstep}. If $\omega$ is sufficiently increased to some $\omega_+ > \omega$, the sufficient decrease criterion \eqref{eq:suffdes} is fulfilled by inexactly computed update steps $\Delta s (\omega_+)$ which additionally satisfy the inexactness criteria \eqref{eq:relerror} and \eqref{eq:subgraddesc}.
\end{lemma}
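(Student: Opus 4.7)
The plan is to reduce the sufficient decrease criterion to a pointwise bound delivered by the classical descent lemma for $f'$, with the inexactness criteria entering only to guarantee the correct sign of the predicted decrease $\lambda_{x,\omega_+}\bigl(\Delta s(\omega_+)\bigr)$. Once that sign is secured, the argument becomes essentially the same one that justifies sufficient decrease for exact proximal gradient steps.

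First I would rewrite the target inequality $F\bigl(x+\Delta s(\omega_+)\bigr) - F(x) \leq \gamma\,\lambda_{x,\omega_+}\bigl(\Delta s(\omega_+)\bigr)$ in the equivalent form
\begin{equation*}
  F\bigl(x+\Delta s(\omega_+)\bigr) - F(x) - \lambda_{x,\omega_+}\bigl(\Delta s(\omega_+)\bigr) \leq (1-\gamma)\bigl|\lambda_{x,\omega_+}\bigl(\Delta s(\omega_+)\bigr)\bigr|,
\end{equation*}
which is meaningful provided $\lambda_{x,\omega_+}\bigl(\Delta s(\omega_+)\bigr) \leq 0$. This sign is exactly what the subgradient inexactness criterion \eqref{eq:subgraddesc} provides, since the closed-form computation \eqref{eq:lambdaequality} gives $\lambda_{x,\tilde\omega}^\mu\bigl(\Delta x^\mu(\tilde\omega)\bigr) = -\frac{1}{2\tilde\omega}\norm{f'(x)+\mu}{X^*}^2 \leq 0$; the degenerate case $f'(x)+\mu=0$ corresponds to a critical point where there is nothing to prove.

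Next I would expand the left-hand side using the definition of $\lambda_{x,\omega_+}$ in \eqref{eq:dampedmodel}. The terms $g\bigl(x+\Delta s(\omega_+)\bigr) - g(x)$ cancel, leaving the purely smooth and quadratic expression
\begin{equation*}
  \bigl[ f\bigl(x+\Delta s(\omega_+)\bigr) - f(x) - f'(x)\Delta s(\omega_+) \bigr] - \tfrac{1}{2}H_x\bigl(\Delta s(\omega_+)\bigr)^2 - \tfrac{\omega_+}{2}\norm{\Delta s(\omega_+)}{X}^2.
\end{equation*}
I would then apply the descent lemma (a standard $[0,1]$-integration using the Lipschitz estimate \eqref{eq:Lipschitz}) to bound the bracketed smooth part above by $\tfrac{L_f}{2}\norm{\Delta s(\omega_+)}{X}^2$, and invoke \eqref{eq:kappa1} to obtain $-\tfrac{1}{2}H_x\bigl(\Delta s(\omega_+)\bigr)^2 \leq -\tfrac{\keins}{2}\norm{\Delta s(\omega_+)}{X}^2$. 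Combining these two bounds yields the clean estimate
\begin{equation*}
  F\bigl(x+\Delta s(\omega_+)\bigr) - F(x) - \lambda_{x,\omega_+}\bigl(\Delta s(\omega_+)\bigr) \leq \tfrac{1}{2}\bigl( L_f - \keins - \omega_+\bigr)\norm{\Delta s(\omega_+)}{X}^2.
\end{equation*}

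Finally, as soon as $\omega_+$ is large enough that $\omega_+ \geq L_f - \keins$ (on top of the solvability requirement $\omega_+ > -(\ksum)$ and the imposed $\omega_+ > \omega$), the right-hand side becomes non-positive and is therefore dominated by the non-negative quantity $(1-\gamma)\bigl|\lambda_{x,\omega_+}\bigl(\Delta s(\omega_+)\bigr)\bigr|$, which is exactly \eqref{eq:suffdes}. The only delicate point in this argument is arranging for $\lambda_{x,\omega_+}\bigl(\Delta s(\omega_+)\bigr)\leq 0$, and that is precisely where \eqref{eq:subgraddesc} is needed; the relative error criterion \eqref{eq:relerror} does not appear in the quantitative estimate and can be viewed purely as an algorithmic prerequisite guaranteeing that an admissible inexact step exists in practice.
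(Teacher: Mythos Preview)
Your argument is correct and in fact more direct than the paper's. Both proofs ultimately rest on the descent-lemma bound
\[
F\bigl(x+\Delta s(\omega_+)\bigr)-F(x)-\lambda_{x,\omega_+}\bigl(\Delta s(\omega_+)\bigr)\le \tfrac12\bigl(L_f-\keins-\omega_+\bigr)\norm{\Delta s(\omega_+)}{X}^2,
\]
but they control the right-hand side differently. The paper first combines \emph{both} inexactness criteria to obtain the quantitative lower bound $|\lambda_{x,\omega_+}(\Delta s(\omega_+))|\ge c\,\norm{\Delta s(\omega_+)}{X}^2$ with $c=\tfrac{1}{2(1+\eta)^2}$, valid once $\omega_+>\omax+\omega+1$ (this is where \eqref{eq:relerror} enters via \eqref{eq:skleinerx}), and then invokes \cite[Lemma~3]{Poetzl2022} to absorb the residual into $(1-\gamma)|\lambda_{x,\omega_+}|$. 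You instead notice that the mere \emph{sign} $\lambda_{x,\omega_+}(\Delta s(\omega_+))\le 0$, delivered by \eqref{eq:subgraddesc} alone, already suffices: once $\omega_+\ge L_f-\keins$ the residual is nonpositive and hence trivially dominated by $(1-\gamma)|\lambda_{x,\omega_+}|$. Your route is shorter, yields an explicit and often sharper threshold, and shows that \eqref{eq:relerror} is superfluous for this particular lemma; the paper's route, on the other hand, establishes the stronger intermediate estimate $\lambda_{x,\omega_+}(\Delta s(\omega_+))\le -c\norm{\Delta s(\omega_+)}{X}^2$, a variant of which reappears later in \eqref{eq:lambdaabs}.
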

\begin{proof}
	The first inexactness criterion \eqref{eq:relerror} provides us with the norm estimate
	\begin{align} \label{eq:skleinerx}
	\norm{\Delta s (\omega_+)}{X} \leq \norm{\Delta s (\omega_+) - \Delta x (\omega_+)}{X} + \norm{\Delta x (\omega_+)}{X} \leq (1+\eta) \norm{\Delta x (\omega_+)}{X}
	\end{align}
	such that similar to \eqref{eq:combined} we obtain
	\begin{align*}
	\lambda_{x,\omega_+} \big( \Delta s (\omega_+) \big) \leq -\frac {1} 2 \norm{\Delta x (\omax + \omega + 1)}{X}^2 \leq -\frac {1} 2 \norm{\Delta x (\omega_+)}{X}^2 \leq - \frac{1}{2(1+\eta)^2} \norm{\Delta s (\omega_+)}{X}^2
	\end{align*}
	eventually for $\omega_+ > \omax + \omega + 1$ by the equivalence result in Lemma~\ref{lem:equivomega}.
	
	Consequently, we can assume that for sufficiently large $\omega_+$, an estimate of the form
	\begin{align*}
	\lambda_{x,\omega_+} \big(\Delta s(\omega_+)\big) \leq - c\norm{\Delta s (\omega_+)}{X}^2
	\end{align*}
	holds also for the inexactly computed update steps and some constant $c > 0$. From here, we can employ the same proof as for \cite[Lemma~3]{Poetzl2022} and conclude the assertion.
\end{proof}
\begin{remark}
	The above result together with the assumption \eqref{eq:kappa1} and \eqref{eq:kappa2} on our objective functional also imply that the regularization parameter $\omega$ remains bounded over the course of the minimization process.
\end{remark} 
Let us now deduce the ensuing global convergence results for the inexact Proximal Newton method as presented in the scheme of Algorithm~\ref{alg:inexproxnewton}. 

\begin{algorithm}
	\caption{Inexact second order semi-smooth Proximal Newton Algorithm} \label{alg:inexproxnewton}
	\label{alg:buildtree}
	\begin{algorithmic}
		\REQUIRE Starting point $x_0 \in X$, sufficient decrease parameter $\gamma \in ]0,1[$, initial values $\omega_0$ and $\eta_0$, $\varepsilon > 0$ for stopping criterion
		\STATE{Initialization: $k=0$}
		\WHILE{$(1+\omega_k)\norm{\Delta s_k(\omega_k)}{X} \geq \varepsilon$}
		\STATE{Choose $\mu \in \partial_Fg(x_k)$ and compute norm term for $\tilde \omega$ as in \eqref{eq:omegatilde} via the linearized minimization problem \eqref{eq:mumin}}
		\STATE{Compute a trial step $\Delta s_k (\omega_k)$ according to \eqref{eq:dampedstep} which suffices the inexactness criteria \eqref{eq:altrelerror} and \eqref{eq:omegatilde}}
		\WHILE{Sufficient decrease criterion \eqref{eq:suffdes} is not satisfied}
		\STATE{Increase $\omega_k$ appropriately}
		\STATE{Recompute $\Delta s_k (\omega_k)$ as above}
		\ENDWHILE
		\STATE{Update current iterate to $x_{k+1} \leftarrow x_k + \Delta s_k(\omega_k)$}
		\STATE{Decrease $\omega_{k}$ to some $\omega_{k+1} < \omega_k$ for next iteration}
		\STATE{Decrease $\eta_k$ to some $\eta_{k+1} < \eta_k$ for next iteration}
		\STATE{Update $k \leftarrow k+1$}
		\ENDWHILE
		\RETURN $x$
	\end{algorithmic}
\end{algorithm}

For this reason, we will first prove that the right-hand side of \eqref{eq:combined}, i.e., the norm of exactly computed comparative steps $\Delta x (\omax + \omega + 1)$, converges to zero along the sequence of iterates generated by inexact updates. Here, it will come in handy to define $\omega^c \coloneqq \omax + \omega + 1$ for the regularization parameter of the comparative exact update steps. Note that this quantity is bounded both from above and below.
\begin{lemma} \label{lem:comptozero}
	Let $(x_k) \subset X$ be the sequence generated by the inexact Proximal Newton method globalized via \eqref{eq:dampedstep} starting at any $x_0 \in \mathrm{dom}g$. Additionally, suppose that the subgradient inexactness criterion \eqref{eq:subgraddesc} and the sufficient decrease criterion \eqref{eq:suffdes} are satisfied. Then either $F(x_k) \to -\infty$ or $\norm{\Delta x_k(\omega^c)}{X} \to 0$ for $k \to \infty$.
\end{lemma}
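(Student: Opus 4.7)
The plan is a standard telescoping argument built directly on the combined descent inequality \eqref{eq:combined}. I would evaluate \eqref{eq:combined} at the generic iterate $x=x_k$ with its accepted regularization parameter $\omega_k$ and the associated comparative value $\omega_k^c \coloneqq \omax + \omega_k + 1$. Since by hypothesis both the subgradient inexactness criterion \eqref{eq:subgraddesc} and the sufficient decrease criterion \eqref{eq:suffdes} hold at every iteration, the full chain in \eqref{eq:combined} is available and immediately yields the quantitative per-step decrease
\[
F(x_{k+1}) - F(x_k) \;\leq\; -\frac{\gamma}{2}\,\norm{\Delta x_k(\omega_k^c)}{X}^2 \qquad \text{for every } k \in \NN.
\]

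Telescoping this inequality over $k=0,\dots,N$ gives
\[
\frac{\gamma}{2}\sum_{k=0}^{N}\norm{\Delta x_k(\omega_k^c)}{X}^2 \;\leq\; F(x_0) - F(x_{N+1}).
\]
The sequence $\bigl(F(x_k)\bigr)$ is monotonically non-increasing by \eqref{eq:suffdes} (note that $\lambda_{x_k,\omega_k}(\Delta s_k(\omega_k)) \leq 0$ since $\delta x = 0$ is a feasible competitor in \eqref{eq:dampedstep}), hence it either diverges to $-\infty$ — in which case the first alternative of the claim holds — or converges to some finite limit $\bar F \in \RR$. In the latter case, letting $N\to\infty$ bounds the partial sums above by $\tfrac{2}{\gamma}(F(x_0)-\bar F) < \infty$, so the series $\sum_k \norm{\Delta x_k(\omega_k^c)}{X}^2$ converges and its summands must tend to zero, which is precisely the second alternative.

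The only point requiring care is the consistent book-keeping of the different regularization parameters inside \eqref{eq:combined}, in particular the passage from the accepted $\omega_k$ to the comparative $\omega_k^c$ via the monotonicity statement in Lemma~\ref{lem:equivomega}; but this has already been performed in the derivation of \eqref{eq:combined}, so nothing new is needed here. I do not anticipate any real obstacle: the lemma is essentially the classical ``square-summability of step norms'' consequence for any descent method equipped with a quantitative lower bound on the per-step decrease, and it does not require uniform boundedness of the sequence $(\omega_k)$ because the decrease is phrased directly in terms of $\norm{\Delta x_k(\omega_k^c)}{X}$ rather than $\norm{\Delta s_k(\omega_k)}{X}$.
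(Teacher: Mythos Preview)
Your argument is correct and mirrors the paper's own proof, which likewise invokes \eqref{eq:combined} to obtain monotonicity of $\bigl(F(x_k)\bigr)$ and then concludes $\norm{\Delta x_k(\omega^c)}{X}\to 0$ from $F(x_k)-F(x_{k+1})\to 0$; your telescoping variant simply extracts the (slightly stronger) square-summability along the way. One small slip: your parenthetical justification ``$\delta x=0$ is a feasible competitor in \eqref{eq:dampedstep}'' only shows $\lambda_{x_k,\omega_k}\bigl(\Delta x_k(\omega_k)\bigr)\le 0$ for the \emph{exact} step, not for the inexact $\Delta s_k(\omega_k)$ --- the non-positivity you need actually comes from the subgradient criterion \eqref{eq:subgraddesc} (or directly from your displayed per-step inequality), so the remark is unnecessary and can be dropped.
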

\begin{proof}
	By \eqref{eq:combined} the sequence $F(x_k)$ is monotonically decreasing. Thus, either $F(x_k) \to -\infty$ or $F(x_k) \to \underline F$ for some $\underline F\in \RR$ and thereby in particular $F(x_{k})-F(x_{k+1})\to 0$. As a consequence of \eqref{eq:combined}, then also $\norm{\Delta x_k(\omega^c)}{X} \to 0$ holds.
\end{proof}
Note that the above result does not comprise the convergence of the sequence of iterates itself which is desirable in the context. In the exact case of update step computation it was possible to take advantage of first order optimality conditions of the exactly solved subproblem for the actual update steps and from there achieve a proper global convergence result at least in the strongly convex case, cf. \cite[Theorem~3]{Poetzl2022}. Due to the presence of inexactness in update step computation this strategy has to be slightly adjusted in the current scenario, i.e., applied to the comparative update steps $\Delta x (\omega^c)$. To this end, for some $k \in \NN$ and iterate $x_k \in X$ we introduce the so-called corresponding comparative iterate
\begin{align} \label{eq:comparative}
y_k \coloneqq x_k + \Delta x_k (\omega^c) = \p_g^{H_{x_k} + \omega^c \riesz} \big( (H_{x_k} + \omega^c \riesz)x_k - f'(x_k) \big) \, .
\end{align}
Note here that the comparative iterate uses a theoretical exact update but origins at the iterate $x_k$ which belongs to our inexact method. Also, for every $k \in \NN$ the identity $y_k - x_k = \Delta x_k(\omega^c)$ holds by definition of $y_k$.

With this definition at hand, we are in the position to discuss at least subsequential convergence of our algorithm to a stationary point. In the following, we will assume throughout that the sequence of objective values $\big(F(x_k)\big)$ is bounded from below. We start with the case of convergence in norm:
\begin{theorem} \label{thm:strongstat}
	Assume that the subgradient inexactness criterion \eqref{eq:subgraddesc} and the sufficient decrease criterion \eqref{eq:suffdes} are fulfilled. Then, all accumulation points $\bar x$ (in norm) of the sequence of iterates $(x_k)$ generated by the inexact Proximal Newton method globalized via \eqref{eq:dampedstep} are stationary points of problem \eqref{eq:prob}. In particular, the comparative sequence $(y_k)$ defined via \eqref{eq:comparative} satisfies 
	\begin{align*}
	\mathrm{dist}\big( \partial_F F(y_k),0 \big) \to 0 \quad \text{and} \quad \norm{x_k - y_k}{X} \to 0 \, ,
	\end{align*}
	i.e., also $y_k \to \bar x$ for $k \to \infty$.
\end{theorem}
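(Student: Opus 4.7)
The proof has two strands that must come together. First, I would establish $\|x_k-y_k\|_X\to 0$ essentially for free from Lemma~\ref{lem:comptozero}: since $(F(x_k))$ is assumed bounded from below, that lemma yields $\|\Delta x_k(\omega^c)\|_X\to 0$, and the definition $y_k-x_k=\Delta x_k(\omega^c)$ in \eqref{eq:comparative} turns this into the desired norm convergence.

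Second, for $\mathrm{dist}(\partial_F F(y_k),0)\to 0$, I would exploit that $y_k$ is the \emph{exact} minimizer of the damped subproblem at $x_k$ with regularization parameter $\omega^c$. The first-order conditions furnish a $\mu_k\in\partial_F g(y_k)$ of the form
\begin{equation*}
\mu_k = -f'(x_k) - H_{x_k}(y_k-x_k) - \omega^c\,\mathcal R(y_k-x_k),
\end{equation*}
and setting $\xi_k := f'(y_k)+\mu_k \in f'(y_k)+\partial_F g(y_k)\subseteq\partial_F F(y_k)$ yields
\begin{equation*}
\xi_k = \bigl(f'(y_k)-f'(x_k)\bigr) - (H_{x_k}+\omega^c\mathcal R)(y_k-x_k).
\end{equation*}
The Lipschitz property \eqref{eq:Lipschitz}, the uniform bound \eqref{eq:Hxbound}, and the boundedness of $\omega^c$ noted after Lemma~\ref{lem:suffdesomega} then give $\|\xi_k\|_{X^*}\leq (L_f+M+\omega^c)\,\|y_k-x_k\|_X\to 0$, which proves the distance claim.

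To conclude, let $\bar x$ be a norm-accumulation point of $(x_k)$ along a subsequence $x_{k_j}\to\bar x$. The first strand gives $y_{k_j}\to\bar x$, and together with the boundedness above, $\mu_{k_j}\to -f'(\bar x)$ in $X^*$. The main obstacle is then passing to the limit in the subdifferential inclusion $\mu_{k_j}\in\partial_F g(y_{k_j})$ to obtain $-f'(\bar x)\in\partial g(\bar x)$, i.e.\ $0\in\partial F(\bar x)$. In norm topology the graph of the Fr\'echet subdifferential of $g$ is not closed in general; closedness requires $g(y_{k_j})\to g(\bar x)$, which is automatic whenever $\kappa_2>0$ since then $g$ is continuous on $X$. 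In the weaker, possibly nonconvex regime admitted by \eqref{eq:kappa2} one should interpret stationarity in terms of the limiting (Mordukhovich) subdifferential, whose outer semicontinuity allows the same argument to go through without a continuity hypothesis on $g$.
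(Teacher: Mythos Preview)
Your first two strands coincide with the paper's proof almost verbatim: Lemma~\ref{lem:comptozero} gives $\|x_k-y_k\|_X\to 0$, and the optimality condition for $y_k$ produces an element of $\partial_F F(y_k)$ whose $X^*$-norm is bounded by $(L_f+M+\omega^c)\|x_k-y_k\|_X$.

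The final passage to the limit, however, is not quite right. The outer semicontinuity of the limiting subdifferential is only guaranteed relative to \emph{$g$-attentive} convergence, i.e.\ one still needs $g(y_{k_j})\to g(\bar x)$; your claim that the Mordukhovich subdifferential ``allows the same argument to go through without a continuity hypothesis on $g$'' is therefore incorrect. Moreover, even with $g$-attentive convergence you land in the limiting subdifferential, so an extra observation (Fr\'echet $=$ limiting for weakly convex $g$) would be needed to recover the Fr\'echet stationarity asserted in the theorem. Finally, your restriction to $\kappa_2>0$ for continuity of $g$ is too cautious: assumption \eqref{eq:kappa2} makes $g+\tfrac{|\kappa_2|}{2}\|\cdot\|_X^2$ convex, lower semi-continuous and real-valued on all of $X$, hence continuous, so $g$ itself is continuous in every case.

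The paper avoids these subtleties by a convexification trick: it fixes a bilinear form $Q$ (e.g.\ $Q=H_{x_k}+\omega_k\mathcal R$) such that $\tilde g:=g+\tfrac12 Q(\cdot)^2$ is convex, rewrites the optimality inclusion as $\tilde\rho_k\in\partial\tilde g(y_k)$ with $\tilde\rho_k\to -f'(\bar x)+Q\bar x$, and then verifies $\tilde\rho\in\partial\tilde g(\bar x)$ directly from the defining inequality of the convex subdifferential together with lower semi-continuity of $g$. Undoing the quadratic shift gives $-f'(\bar x)\in\partial_F g(\bar x)$. This argument uses only lsc of $g$ and yields Fr\'echet stationarity in one step, without invoking graph-closedness results for nonconvex subdifferentials.
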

\begin{proof}
	By $(x_k)$ we denote the subsequence of iterates converging to the accumulation point $\bar x$. As mentioned beforehand, for the corresponding comparative sequence $(y_k)$ we have $y_k - x_k = \Delta x_k(\omega^c)$ and consequently also $y_k \to \bar x$ holds by $\norm{\Delta x_k (\omega^c)}{X} \to 0$ due to Lemma~\ref{lem:comptozero}. The proximal representation of $y_k$ in \eqref{eq:comparative} is equivalent to the minimization problem
	\begin{align*}
	y_k = \argmin_{y \in X} g(y) + \frac 12 \big( H_{x_k} + \omega^c \riesz \big)(y)^2 - \big( (H_{x_k} + \omega^c \riesz)x_k - f'(x_k) \big)y
	\end{align*}
	which yields the first order optimality conditions given by the dual space inclusion
	\begin{align*}
	0 \in \partial_F g(y_k) + f'(x_k) + \big( H_{x_k} + \omega^c \riesz \big)(y_k - x_k) \, .
	\end{align*}
	This, on the other hand, is equivalent to
	\begin{align} \label{eq:ohneQ}
	\big( H_{x_k} + \omega^c \riesz \big) (x_k - y_k) + f'(y_k) - f'(x_k) \in \partial_F g (y_k) + f'(y_k) = \partial_F F(y_k)
	\end{align}
	the remainder term on the left-hand side of which we can estimate via
	\begin{align*}
	\norm{\big( H_{x_k} + \omega^c \riesz \big)(x_k - y_k) + f'(y_k) - f'(x_k)}{X^*} &\leq \big( M + \omega^c + L_f \big)\norm{x_k - y_k}{X} \\ &= \big( M + \omega^c + L_f \big)\norm{\Delta x_k^c(\omega^c)}{X} \to 0
	\end{align*}
	for $k \to \infty$ where $M$ denotes the uniform bound on the second order bilinear form norms from assumption \eqref{eq:Hxbound}.
	
	In order to now achieve the optimality assertion of the accumulation point $\bar x$, we have to slightly adjust \eqref{eq:ohneQ} for the use of the convex subdifferential and its direct characterization. To this end, we consider a bilinear form $Q:X \times X \to \RR$ such that the function $\tilde g: X \to \RR$ defined via $\tilde g(x) \coloneqq g(x) + \frac 12 Q(x)^2$, $x\in X$, is convex. As above, $Q\coloneqq H_{x_k} + \omega_k\riesz$ is a reasonable choice. Inserting a $Q(y_k)$-term into \eqref{eq:ohneQ} thus yields
	\begin{align*}
	\omega^c \riesz (x_k - y_k) + f'(y_k) - f'(x_k) \in \partial \tilde g (y_k) + \{f'(y_k) - Q(y_k)\}
	\end{align*}
	for the convex subdifferential of $\tilde g$. The left-hand side now as before converges to zero in $X^*$ and consequently, we know that for every $k\in \NN$ there exists some $\tilde \rho_k \in \partial \tilde g (y_k)$ such that we can define $\tilde \rho \coloneqq \lim_{k \to \infty} \tilde \rho_k = -f'(\bar x) + Q \bar x$ by the convergence of also $y_k$ to $\bar x$. The lower semi-continuity of $\tilde g$ together with the definition of the convex subdifferential $\partial \tilde g$ directly yields
	\begin{align*}
	\tilde g (u) - \tilde g(\bar x) &= \tilde g(u)- g(\bar x) - \frac 12 Q(\bar x)^2 \geq \tilde g(u) - \liminf_{k \to \infty} g(y_k) - \lim_{k \to \infty} \frac 12 Q(y_k)^2 \\
	&= \liminf_{k \to \infty} \tilde g(u) - \tilde g(y_k) \geq \liminf_{k \to \infty} \tilde \rho_k (u- y_k) = \lim_{k \to \infty} \tilde \rho_k (u- y_k) = \tilde \rho (u-\bar x)
	\end{align*}
	for any $u \in X$ which proves the inclusion $\tilde \rho \in \partial \tilde g(\bar x)$. The evaluation of the latter limit expression can easily be retraced by splitting
	\begin{align} \label{eq:weaklim}
	\tilde \rho_k (u- y_k) = \tilde \rho_k (u-\bar x) + (\tilde \rho_k - \tilde \rho)(\bar x- y_k) + \tilde \rho (\bar x- y_k) \, .
	\end{align}
	In particular, we recognize $\tilde \rho \in \partial \tilde g(\bar x)$ as $-f'(\bar x) + Q \bar x \in \partial \tilde g(\bar x)$ and equivalently $-f'(\bar x) \in \partial_F g(\bar x)$ for the Frech\'et-subdifferential $\partial_F$. This implies $0 \in \partial_F F (\bar x)$, i.e., the stationarity of our accumulation point $\bar x$.
\end{proof}
We remember from the exact case in \cite{Poetzl2022} that we can indeed interpret $\norm{\Delta x_k(\omega_k)}{X} \leq \varepsilon$ for some small $\varepsilon > 0$ as a condition for the optimality of the current iterate up to some prescribed accuracy. Estimate \eqref{eq:skleinerx} from above thus yields that also the norm of the inexactly computed update steps can be used as an optimality measure for the current iterate within our method.
However, small step norms $\norm{\Delta s_k(\omega_k)}{X}$ can also occur due to very large values of the damping parameter $\omega_k$ as a consequence of which the algorithm would stop even though the sequence of iterates is not even close to an optimal solution of the problem. In order to rule out this inconvenient case, we consider the scaled version $(1+\omega_k)\norm{\Delta s_k(\omega_k)}{X}$ as the stopping criterion in the later implementations of our algorithm.


Let us now proceed to generalizing the convergence result from Theorem~\ref{thm:strongstat}: While bounded sequences in finite dimensional spaces always have convergent subsequences, we can only expect \emph{weak subsequential convergence} in general Hilbert spaces in this case. As one consequence, existence of minimizers of non-convex functions on Hilbert spaces can usually only be established in the presence of some compactness. On this count, we note that in \eqref{eq:weaklim} even weak convergence of $x_k \rightharpoonup \bar x$ would be sufficient. Unfortunately, in the latter case we cannot evaluate $f'(x_k) \to f'(\bar x)$. 
In order to extend our proof to this situation, we require some more structure for both of the parts of our composite objective functional. The proof is completely analogous to the one of \cite[Theorem~3]{Poetzl2022}.
\begin{theorem}
	Let $f$ be of the form $f(x)= \hat f(x) + \check f(Kx)$ where $K$ is a compact operator. Additionally, assume that $g + \hat f$ is convex and weakly lower semi-continuous in a neighborhood of stationary points of \eqref{eq:prob}. Then weak convergence of the sequence of iterates $x_k \rightharpoonup \bar x$ suffices for $\bar x$ to be a stationary point of \eqref{eq:prob}.
	
	If $F$ is strictly convex and radially unbounded, the whole sequence $(x_k)$ converges weakly to the unique minimizer $x_*$ of $F$. If $F$ is $\kappa$-strongly convex, with $\kappa > 0$, then $x_k \to x_*$ in norm. 
\end{theorem}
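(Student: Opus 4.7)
The plan is to adapt the argument of Theorem~\ref{thm:strongstat} to the weak convergence setting, where the essential new subtlety is that $f'$ does not behave continuously under $\rightharpoonup$ in general. I depart from the first order condition for the comparative iterate $y_k = x_k + \Delta x_k(\omega^c)$,
\begin{align*}
	0 \in \partial_F g(y_k) + f'(x_k) + \big( H_{x_k} + \omega^c \riesz \big)(y_k - x_k),
\end{align*}
and recall from Lemma~\ref{lem:comptozero} that $\norm{y_k - x_k}{X} = \norm{\Delta x_k(\omega^c)}{X} \to 0$, so $y_k \rightharpoonup \bar x$ as well. Splitting $f'(z) = \hat f'(z) + K^* \check f'(Kz)$ and adding $\hat f'(y_k)$ to both sides of the inclusion, it is recast as
\begin{align*}
	\xi_k \coloneqq \hat f'(y_k) - \hat f'(x_k) - K^* \check f'(K x_k) - \big(H_{x_k} + \omega^c \riesz\big)(y_k - x_k) \in \partial_F (g + \hat f)(y_k),
\end{align*}
which, by the convexity assumption on $g + \hat f$ near the stationary set, coincides locally with the convex subdifferential $\partial(g + \hat f)(y_k)$.

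Next I pass to the limit strongly in $X^*$ inside $\xi_k$. Lipschitz continuity of $\hat f'$ together with the uniform bound \eqref{eq:Hxbound} and $\norm{y_k - x_k}{X} \to 0$ render the first, second and fourth summands of $\xi_k$ vanishing in norm; the compactness of $K$ yields $K x_k \to K \bar x$ strongly and hence $K^* \check f'(K x_k) \to K^* \check f'(K \bar x)$ strongly in $X^*$ by continuity of $\check f'$. Therefore $\xi_k \to \xi \coloneqq -K^* \check f'(K \bar x)$ in $X^*$. Inserting this into the convex subgradient inequality $(g + \hat f)(u) \geq (g + \hat f)(y_k) + \xi_k(u - y_k)$ for $u$ in a neighborhood of $\bar x$, splitting $\xi_k(u - y_k)$ exactly as in \eqref{eq:weaklim} and combining the strong convergence $\xi_k \to \xi$ with $y_k \rightharpoonup \bar x$ and the weak lower semi-continuity of $g + \hat f$, I obtain $\xi \in \partial(g + \hat f)(\bar x)$. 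The smooth sum rule for the Fr\'echet-subdifferential then rewrites this as $-f'(\bar x) \in \partial_F g(\bar x)$, i.e.\ $0 \in \partial_F F(\bar x)$, so $\bar x$ is stationary.

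For the addendum: if $F$ is strictly convex and radially unbounded, the monotonic decrease of $\bigl(F(x_k)\bigr)$ from \eqref{eq:combined} combined with radial unboundedness forces $(x_k)$ to remain bounded, hence every subsequence admits a weakly convergent sub-subsequence whose limit is a stationary point. By strict convexity this limit must coincide with the unique minimizer $x_*$, and Urysohn's subsequence principle upgrades this to $x_k \rightharpoonup x_*$. If $F$ is additionally $\kappa$-strongly convex, the standard estimate $\tfrac{\kappa}{2}\norm{x_k-x_*}{X}^2 \leq F(x_k)-F(x_*) \to 0$ improves weak to norm convergence. The main obstacle in the program is the second paragraph: the passage to the limit in the subdifferential inclusion requires strong convergence of $\xi_k$ in $X^*$, which would fail for a general non-compact operator in place of $K$. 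The decomposition $f = \hat f + \check f \circ K$ is tailored precisely so that the potentially non-convex component $\check f\circ K$ becomes compact in this sense, while the remaining non-compact part $\hat f$ is absorbed into $g + \hat f$ where the weak lower semi-continuity tool applies.
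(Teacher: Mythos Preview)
The main stationarity argument under weak convergence is correct and is precisely the adaptation the paper has in mind (it simply defers to the analogous result in \cite{Poetzl2022}); your use of the compactness of $K$ to obtain $Kx_k\to K\bar x$ strongly, and of the local convexity and weak lower semi-continuity of $g+\hat f$ to pass the subgradient inequality to the limit via the splitting \eqref{eq:weaklim}, is exactly what is needed. The strictly convex addendum is also fine.

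There is, however, one gap in the final clause. You claim $F(x_k)-F(x_*)\to 0$ and deduce norm convergence from the strong-convexity inequality, but monotone decrease of $\bigl(F(x_k)\bigr)$ together with weak lower semi-continuity of $F$ only gives $\lim F(x_k)\ge F(x_*)$, not equality; nothing established so far rules out $\lim F(x_k)>F(x_*)$. The route that actually closes the argument, mirroring Theorem~\ref{thm:strongstat}, is to use $\mathrm{dist}\bigl(\partial_F F(y_k),0\bigr)\to 0$, which depends only on $\norm{y_k-x_k}{X}\to 0$ and the uniform bound \eqref{eq:Hxbound}, not on any convergence of the iterates themselves. Then $\kappa$-strong convexity yields $\norm{y_k-x_*}{X}\le\kappa^{-1}\,\mathrm{dist}\bigl(\partial_F F(y_k),0\bigr)\to 0$, and combining with $\norm{x_k-y_k}{X}\to 0$ gives $x_k\to x_*$ in norm.
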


\section{Transition to Local Convergence} \label{sec:transition}

In order to now benefit from the local acceleration result in Theorem~\ref{thm:localconv}, we have to manage the transition from the globalization phase above to the local convergence phase described beforehand. To this end, we have to make sure that (at least close to optimal solutions of \eqref{eq:prob}) arbitrarily small regularization parameters $\omega \geq 0$ yield update steps that give us sufficient decrease in $F$ according to the criterion formulated in \eqref{eq:suffdes}. This endeavor has also been part of the investigation of the exact case in \cite[Section~6]{Poetzl2022} but as for all aspects of our convergence analysis has to be slightly adapted here.

As a starting point, a rather technical auxiliary result is required. It sets the limit behavior of inexact update steps in relation with the distance of consecutive iterates to the minimizer of \eqref{eq:prob}. 
\begin{lemma} \label{lem:oestimates}
	Let $x$ and $x_+ (\omega)=x+\Delta s (\omega)$ be two consecutive iterates with update step $\Delta s (\omega)$ sufficing \eqref{eq:relerror} for some $0 \leq \eta < 1$. Furthermore, consider an optimal solution $x_*$ of \eqref{eq:prob}. Then the following estimates eventually hold for $\ksum > 0\colon$
	\begin{align*}
	\norm{x_+(\omega) - x_*}{X} \leq (3 + \eta)\norm{x-x_*}{X} \quad , \quad \norm{x-x_*}{X} \leq \frac{2}{1-\eta}\big(1+\frac{\omega}{\ksum}\big)\norm{\Delta s (\omega)}{X} \, .
	\end{align*}
\end{lemma}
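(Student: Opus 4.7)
The plan is to reduce both inequalities to statements about the exact undamped step $\Delta x(0)$, for which the local superlinear convergence of the exact (full) Proximal Newton method from \cite[Theorem~1]{Poetzl2022} is available. More precisely, that result gives $\norm{x + \Delta x(0) - x_*}{X} = o(\norm{x-x_*}{X})$ whenever $x$ is sufficiently close to $x_*$ and $\ksum > 0$, which is exactly the ingredient already used in \eqref{eq:quanteta}. The two bridges I need are: first, the relative error criterion \eqref{eq:relerror}, which by the (reverse) triangle inequality yields
\begin{align*}
(1-\eta)\norm{\Delta x(\omega)}{X} \leq \norm{\Delta s(\omega)}{X} \leq (1+\eta)\norm{\Delta x(\omega)}{X},
\end{align*}
and second, Lemma~\ref{lem:equivomega}, specifically \eqref{eq:equivomega} with the choice of comparison parameter $0 \leq \omega$, to move between $\Delta x(\omega)$ and $\Delta x(0)$. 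The word \emph{eventually} in the statement corresponds to taking $x$ close enough to $x_*$ so that the $o(\norm{x-x_*}{X})$ quantity can be absorbed into a constant factor.

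For the first estimate I would start from $\norm{x_+(\omega)-x_*}{X} \leq \norm{x-x_*}{X} + \norm{\Delta s(\omega)}{X}$, then chain
\begin{align*}
\norm{\Delta s(\omega)}{X} \leq (1+\eta)\norm{\Delta x(\omega)}{X} \leq (1+\eta)\norm{\Delta x(0)}{X},
\end{align*}
where the second inequality is \eqref{eq:equivomega}. Writing $\norm{\Delta x(0)}{X} \leq \norm{x+\Delta x(0)-x_*}{X} + \norm{x-x_*}{X}$ and using the superlinear estimate, eventually $\norm{\Delta x(0)}{X} \leq (1+c)\norm{x-x_*}{X}$ for arbitrarily small $c > 0$. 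Choosing $c \leq (1+\eta)^{-1}$ (which is possible eventually since $\eta < 1$) gives $\norm{\Delta s(\omega)}{X} \leq (2+\eta)\norm{x-x_*}{X}$, and hence the claimed factor $3+\eta$.

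For the second estimate I would reverse both directions: $\norm{\Delta s(\omega)}{X} \geq (1-\eta)\norm{\Delta x(\omega)}{X}$ yields $\norm{\Delta x(\omega)}{X} \leq \tfrac{1}{1-\eta}\norm{\Delta s(\omega)}{X}$, and \eqref{eq:equivomega} gives $\norm{\Delta x(0)}{X} \leq \bigl(1+\tfrac{\omega}{\ksum}\bigr)\norm{\Delta x(\omega)}{X}$, so
\begin{align*}
\norm{\Delta x(0)}{X} \leq \frac{1}{1-\eta}\Bigl(1+\frac{\omega}{\ksum}\Bigr)\norm{\Delta s(\omega)}{X}.
\end{align*}
To close the loop, I bound $\norm{x-x_*}{X} \leq \norm{x+\Delta x(0)-x_*}{X} + \norm{\Delta x(0)}{X}$; superlinear convergence eventually makes the first term at most $\tfrac{1}{2}\norm{x-x_*}{X}$, so that $\norm{x-x_*}{X} \leq 2\norm{\Delta x(0)}{X}$. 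Combining produces the asserted bound with prefactor $\tfrac{2}{1-\eta}(1+\omega/\ksum)$. The main obstacle is merely bookkeeping: one has to verify that the single $o$-term from \cite[Theorem~1]{Poetzl2022} can indeed be absorbed simultaneously into the $+1$ (yielding the factor $3+\eta$ from $2+\eta$) and the $\tfrac{1}{2}$ (yielding the factor $2$), but because both requirements are of the form $c \leq$ constant, they are both met once $x$ is close enough to $x_*$, which is exactly the force of the qualifier ``eventually''.
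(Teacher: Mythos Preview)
Your proposal is correct and follows essentially the same route as the paper: both arguments reduce to the exact undamped step $\Delta x = \Delta x(0)$ via the relative error criterion \eqref{eq:relerror} and Lemma~\ref{lem:equivomega}, and then absorb the superlinear remainder from \cite[Theorem~1]{Poetzl2022} into the constants $3+\eta$ and $2$ respectively. The only cosmetic difference is that for the first estimate the paper writes the decomposition as $\norm{x_+(\omega)-x_*}{X} \le 2\norm{x-x_*}{X} + (1+\eta)\norm{x+\Delta x - x_*}{X}$ and then uses $\psi < 1$, whereas you bound $\norm{\Delta x(0)}{X} \le (1+c)\norm{x-x_*}{X}$ first and then fix $c \le (1+\eta)^{-1}$; the content is identical.
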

\begin{proof}
	Our proof here mainly exploits the local superlinear convergence of exactly computed and undamped update steps $\Delta x$ from \cite[Theorem~1]{Poetzl2022} and then uses the respective estimates in order to introduce the influences of both damping and inexactness. For the first asserted estimate, we take a look at
	\begin{align*}
	\norm{x_+(\omega) - x_*}{X} &\leq \norm{x-x_*}{X} + \norm{\Delta s (\omega)}{X} \leq \norm{x-x_*}{X} + (1+\eta)\norm{\Delta x}{X} \\
	&\leq 2\norm{x-x_*}{X} + (1+\eta)\norm{x + \Delta x - x_*}{X}
	\end{align*}
	where the second step involved \eqref{eq:skleinerx} together with $\norm{\Delta x (\omega)}{X} \leq \norm{\Delta x}{X}$ as proven in Lemma~\ref{lem:equivomega}. From here, we use the superlinear convergence of exact updates in the form of the existence of some function $\psi:[0,\infty[ \to [0,\infty[$ with $\psi(t) \to 0$ for $t \to 0$ such that
	\begin{align*}
	\norm{x + \Delta x - x_*}{X} = \psi\big( \norm{x-x_*}{X} \big) \norm{x-x_*}{X}
	\end{align*}
	holds in the limit of $x \to x_*$. Thus, we obtain 
	\begin{align*}
	\norm{x_+(\omega) - x_*}{X} \leq \big[ 2 + (1+\eta)\psi\big( \norm{x-x_*}{X} \big) \big] \norm{x-x_*}{X} \leq (3+\eta)\norm{x-x_*}{X}
	\end{align*}
	since eventually we can assume the $\psi$-term to be smaller than one. This completes the proof of the first asserted estimate.
	
	For the second one we take advantage of
	\begin{align*}
	\norm{\Delta x}{X} \leq \big( 1 + \frac \omega {\ksum} \big)\norm{\Delta x (\omega)}{X}
	\end{align*}
	from Lemma~\ref{lem:equivomega} together with again the superlinear convergence as above and find that
	\begin{align*}
	\norm{x-x_*}{X} &\leq \norm{x + \Delta x - x_*}{X} + \norm{\Delta x}{X} \leq \psi\big( \norm{x-x_*}{X} \big) \norm{x-x_*}{X} + \big( 1 + \frac \omega {\ksum} \big)\norm{\Delta x (\omega)}{X}
	\end{align*}
	holds. Since the $\psi$-term eventually will be smaller than $\frac 12$, from here we infer
	\begin{align*}
	\norm{x-x_*}{X} \leq \frac{1 + \frac \omega {\ksum}}{1 - \psi\big( \norm{x-x_*}{X} \big)}\norm{\Delta x (\omega)}{X} \leq 2\big( 1 + \frac \omega {\ksum} \big)\norm{\Delta x (\omega)}{X} \, .
	\end{align*}
	The inexactness of update step computation now enters the above estimate using the inequality $\norm{\Delta x (\omega)}{X} \leq \frac{1}{1-\eta} \norm{\Delta s (\omega)}{X}$ which can easily be retraced via
	\begin{align*}
		(1-\eta)\norm{\Delta x (\omega)}{X} &\leq \norm{\Delta x (\omega)}{X} - \norm{\Delta x (\omega) - \Delta s (\omega)}{X} \\
		&\leq \norm{\Delta x (\omega) -\big( \Delta x (\omega) - \Delta s (\omega) \big)}{X} = \norm{\Delta s (\omega)}{X}
	\end{align*}
	with the inexactness criterion \eqref{eq:relerror}. This completes the proof of the lemma. 
\end{proof}
\begin{remark}
In particular, these eventual norm estimates have implications on the limit behavior of the respective terms. If we now have $\xi = o\big(\norm{x_+(\omega) - x_*}{X}\big)$ for some $\xi \in X$, $\xi = o\big(\norm{x - x_*}{X}\big)$ immediately holds and from there we obtain $\xi = o\big(\norm{\Delta s (\omega)}{X}\big)$ in the same way.
\end{remark}

In what follows, it will be important several times that the second order bilinear forms $H_x$ satisfy a bound of the form
\begin{align}  \label{eq:Hcontass}
\big(H_{x_+(\omega)}-H_x\big) \big(x_+(\omega)-x_*\big)^2 = o\big( \norm{x-x_*}{X}^2 \big) \, \mbox{ for } x \to x_* \, .
\end{align}
It is easy to see that the bound holds if either we have uniform boundedness of the second order bilinear forms together with superlinear convergence of the iterates or if we have continuity of the mapping $x \mapsto H_x$ together with mere convergence of the iterates to $x_*$. Note here that the same assumption has been made in the exact case in \cite{Poetzl2022} for the admissibility of undamped and arbitrarily weakly damped update steps. In our scenario, we conclude that according to Theorem~\ref{thm:localconv} it is sufficient that both the regularization parameters $\omega_k \geq 0$ and the forcing terms $\eta_k \geq 0$ converge to zero as we approach the optimal solution $x^* \in X$ of \eqref{eq:prob} together with assumption \eqref{eq:Hxbound} from the introductory section. We will later on establish this convergence of $(\omega_k)$ and $(\eta_k)$ in the specific implementation of our algorithm.

With the auxiliary estimates from Lemma~\ref{lem:oestimates} and Lemma~\ref{lem:equivomega} together with the thoroughly discussed additional assumption from \eqref{eq:Hcontass} at hand, we can now turn our attention to the actual admissibility of arbitrarily small update steps close to optimal solutions of \eqref{eq:prob}.

For that matter, we furthermore suppose $f$ to be second order semi-smooth at optimal solutions $x_*$ of \eqref{eq:prob} with respect to the mapping $H:X \to \Lin , x \mapsto H_x$, which expresses itself via the estimate
\begin{align} \label{eq:sos}
f(x_* + \xi) = f(x_*) + f'(x_*)\xi + \frac 12 H_{x_* + \xi}(\xi,\xi) + o\big(\norm{\xi}{X}^2\big) \quad \text{for} \quad \norm{\xi}{X} \to 0 \, .
\end{align}
This notion generalizes second order differentiability in our setting but its definition slightly differs from semi-smoothness of $f'$ as qualified in \eqref{eq:semismooth}. For further elaborations on this concept of differentiability, consider \cite[Section~5]{Poetzl2022}.
\begin{proposition} \label{prop:dampedstepadmiss}
	Suppose that the additional assumptions \eqref{eq:Hcontass} and \eqref{eq:sos} hold. Furthermore, assume that the update steps $\Delta s (\omega)$ computed as inexact solutions of \eqref{eq:dampedstep} at $x\in X$ for some $\omega \geq 0$ satisfy the inexactness criteria \eqref{eq:relerror} for $\eta \geq 0$ and \eqref{eq:subgraddesc}. Then, $\Delta s (\omega)$ is admissible for sufficient decrease according to \eqref{eq:suffdes} for any $\gamma < 1$ if $x$ is sufficiently close to an optimal solution $x^* \in X$ of \eqref{eq:prob} where $\ksum > 0$ holds.
\end{proposition}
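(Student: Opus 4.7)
The plan is to verify the sufficient decrease inequality \eqref{eq:suffdes} by decomposing
\begin{align*}
F(x_+(\omega)) - F(x) - \gamma\lambda_{x,\omega}(\Delta s(\omega)) = R(x,\omega) + (1-\gamma)\lambda_{x,\omega}(\Delta s(\omega)),
\end{align*}
where the \emph{model mismatch} $R(x,\omega) \coloneqq F(x_+(\omega)) - F(x) - \lambda_{x,\omega}(\Delta s(\omega))$ measures how well the regularized second order model reproduces the actual functional decrease. The strategy is to establish (i) $R(x,\omega) \leq o\bigl(\norm{x-x_*}{X}^2\bigr)$ by a careful Taylor-type expansion of $f$, and (ii) $\lambda_{x,\omega}(\Delta s(\omega)) \leq -c\,\norm{x-x_*}{X}^2$ for some $c>0$ independent of $\omega$; the latter then dominates the former for $x$ near $x_*$ and makes the sum non-positive.

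For the mismatch, note first that the $g$-contributions in $F(x_+(\omega))-F(x)$ and $\lambda_{x,\omega}(\Delta s(\omega))$ cancel exactly, leaving
\begin{align*}
R(x,\omega) = f(x_+(\omega)) - f(x) - f'(x)\Delta s(\omega) - \tfrac12 H_x(\Delta s(\omega))^2 - \tfrac{\omega}{2}\norm{\Delta s(\omega)}{X}^2.
\end{align*}
The plan is to expand $f(x_+(\omega))$ and $f(x)$ around $x_*$ via the second order semi-smoothness \eqref{eq:sos}, subtract, and use \eqref{eq:semismooth} to eliminate $f'(x_*)$ in favor of $f'(x)+H_x(x_*-x)$. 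The $o$-remainders from both expansions can be absorbed into $o\bigl(\norm{x-x_*}{X}^2\bigr)$ because Lemma~\ref{lem:oestimates} supplies $\norm{\Delta s(\omega)}{X}, \norm{x_+(\omega)-x_*}{X} = O\bigl(\norm{x-x_*}{X}\bigr)$. The polarization identity
\begin{align*}
H_x(x_+(\omega)-x_*)^2 = H_x(x-x_*)^2 + 2 H_x(x-x_*,\Delta s(\omega)) + H_x(\Delta s(\omega))^2
\end{align*}
then forces the remaining $H_x$-terms to cancel against $\tfrac12 H_x(\Delta s(\omega))^2$ and the mixed term produced by \eqref{eq:semismooth}. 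The sole surviving algebraic residue is $\tfrac12 \bigl(H_{x_+(\omega)} - H_x\bigr)(x_+(\omega)-x_*)^2$, which is $o\bigl(\norm{x-x_*}{X}^2\bigr)$ by assumption \eqref{eq:Hcontass}, while the remaining $-\tfrac{\omega}{2}\norm{\Delta s(\omega)}{X}^2$ is non-positive and is simply discarded.

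For the quadratic lower bound on $|\lambda_{x,\omega}(\Delta s(\omega))|$, the plan is to chain the subgradient inexactness criterion \eqref{eq:subgraddesc} with the monotonicity $\lambda_{x,\tilde\omega}^\mu(\Delta x^\mu(\tilde\omega)) \leq \lambda_{x,\tilde\omega+1}^\mu(\Delta x^\mu(\tilde\omega+1))$, the key bound \eqref{eq:lambdamunorm}, and the ordering of exact step norms from Lemma~\ref{lem:equivomega} (using $\tilde\omega<\omax$) to obtain $\lambda_{x,\omega}(\Delta s(\omega)) \leq -\tfrac12 \norm{\Delta x(\omega^c)}{X}^2$ with $\omega^c \coloneqq \omax + \omega + 1$. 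A further application of Lemma~\ref{lem:equivomega} gives $\norm{\Delta x(\omega^c)}{X} \geq \tfrac{\ksum}{\omega^c + \ksum}\norm{\Delta x}{X}$, and superlinear convergence of the undamped exact Newton step (cf.\ \cite[Theorem~1]{Poetzl2022}) finally yields $\norm{\Delta x}{X} \geq \tfrac{1}{2}\norm{x-x_*}{X}$ for $x$ close enough to $x_*$. Since the remark after Lemma~\ref{lem:suffdesomega} guarantees that $\omega$ (hence $\omega^c$) stays bounded throughout the algorithm, the resulting constant $c>0$ is uniform. Combining both parts gives $F(x_+(\omega)) - F(x) - \gamma\lambda_{x,\omega}(\Delta s(\omega)) \leq o\bigl(\norm{x-x_*}{X}^2\bigr) - (1-\gamma) c\,\norm{x-x_*}{X}^2$, which is non-positive for $x$ sufficiently near $x_*$ and any fixed $\gamma<1$.

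The main obstacle will be the delicate algebraic bookkeeping for the mismatch estimate: only after the polarization cancellation does the sole $H$-difference term controlled by \eqref{eq:Hcontass} remain, and all $o$-remainders from \eqref{eq:sos} and \eqref{eq:semismooth} must be converted to the same reference scale $\norm{x-x_*}{X}^2$ via Lemma~\ref{lem:oestimates}. A secondary but essential point is that the lower bound on $|\lambda_{x,\omega}(\Delta s(\omega))|$ must not deteriorate as $\omega\to 0^+$, which is precisely what makes the detour through the uniformly bounded parameter $\omega^c$ necessary in order to accommodate the arbitrarily weakly damped regime relevant for the transition to local convergence.
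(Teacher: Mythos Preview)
Your proposal is correct and follows essentially the same strategy as the paper's proof: the telescoping expansion of $f(x_+(\omega))-f(x)-f'(x)\Delta s(\omega)-\tfrac12 H_x(\Delta s(\omega))^2$ via \eqref{eq:sos}, \eqref{eq:semismooth}, and the polarization identity, leaving the single residue $\tfrac12(H_{x_+(\omega)}-H_x)(x_+(\omega)-x_*)^2$ handled by \eqref{eq:Hcontass}; and the lower bound on $|\lambda_{x,\omega}(\Delta s(\omega))|$ via the chain \eqref{eq:subgraddesc}$\to$\eqref{eq:lambdamunorm}$\to$Lemma~\ref{lem:equivomega}. The only cosmetic differences are that the paper organizes the argument as a ratio $\gamma(x,\omega)=F\text{-decrease}/\lambda_{x,\omega}$ and shows $\gamma(x,\omega)\to 1^+$, whereas you use the equivalent additive splitting $R(x,\omega)+(1-\gamma)\lambda_{x,\omega}$; and the paper takes $\norm{\Delta s(\omega)}{X}^2$ as the reference scale (bounding $|\lambda_{x,\omega}|\geq C\norm{\Delta s(\omega)}{X}^2$) while you use $\norm{x-x_*}{X}^2$, the two being interchangeable by Lemma~\ref{lem:oestimates}.
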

\begin{proof}
	We take a look back at the proof of \cite[Proposition~8]{Poetzl2022} and employ the same telescoping strategy in order to obtain
	\begin{align*}
	&f(x_+(\omega)) - f(x) - f'(x)\Delta s(\omega) - \frac 12 H_x (\Delta s(\omega))^2 \\
	&=\left[f(x_+(\omega))-f(x_*)-f'(x_*)(x_+(\omega)-x_*)-\frac12 H_{x_+(\omega)}(x_+(\omega)-x_*)^2\right] \\
	&\quad -\left[f(x) - f(x_*) - f'(x_*)(x-x_*) - \frac12 H_x(x-x_*)^2\right]  \\
	&\quad -\Big[(f'(x)-f'(x_*))\Delta s(\omega)-H_x(x-x_*,\Delta s(\omega))\Big]+ \frac12(H_{x_+(\omega)}-H_x) (x_+(\omega)-x_*)^2
	\end{align*}
	where again we can use the second order semi-smoothness of $f$ according to \eqref{eq:sos} for the first two terms as well as the semi-smoothness of $f'$ as in \eqref{eq:semismooth} for the third one. This implies
	\begin{align*}
	f(x_+(\omega)) - f(x) - f'(x)\Delta s(\omega) - \frac 12 H_x (\Delta s(\omega))^2 &= o(\|x_+(\omega)-x_*\|^2)+o(\|x-x_*\|_X^2) \\
	&\quad +o(\|x-x_*\|_X)\|\Delta s(\omega)\|_X + \rho(x,\omega)
	\end{align*}
	where we denoted $\rho(x,\omega) \coloneqq \frac12(H_{x_+(\omega)}-H_x) (x_+(\omega)-x_*)^2$. Due to the limit behavior of inexact update step norms investigated over the course of Lemma~\ref{lem:oestimates} this yields
	\begin{align} \label{eq:rhoundkleino}
	f(x+\Delta s(\omega)) - f(x) - f'(x)\Delta s (\omega) - \frac 12 H_x\big(\Delta s(\omega)\big)^2 = \rho(x,\omega) + o\big(\norm{\Delta s (\omega)}{X}^2\big) \, .
	\end{align}
	As the next step towards the admissibility result, we define the prefactor function
	\begin{align*}
	\gamma (x,\omega) \coloneqq \frac{F\big(x + \Delta s (\omega)\big)- F(x)}{\lambda_{x,\omega}\big(\Delta s (\omega)\big)}
	\end{align*}
	which should be larger than some $\tilde \gamma \in ]0,1[$ for $\Delta s (\omega)$ to yield sufficient decrease according to \eqref{eq:suffdes}. Thus, it suffices to show the convergence of $\gamma (x,\omega)$ to anything greater equal than one for any $\omega \geq 0$ in the limit of $x \to x^*$. The identity \eqref{eq:rhoundkleino} from above now provides us with 
	\begin{align*}
	F\big(x + \Delta s (\omega)\big)- F(x) = \lambda_{x,\omega}(\Delta s(\omega))-\frac{\omega}{2}\|\Delta s(\omega)\|^2_X + \rho(x,\omega) + o\big(\|\Delta s(\omega)\|_X^2\big)
	\end{align*}
	which we insert into the prefactor function from above and estimate
	\begin{align} 
	\begin{split} \label{eq:gammaest}
	\gamma(x,\omega) &= 1+\frac{-\frac{\omega}{2}\|\Delta s(\omega)\|^2_X 
		+\rho(x,\omega)+o(\|\Delta s(\omega)\|_X^2) }{\lambda_{x,\omega}\big(\Delta s(\omega)\big)}\\
	&=1+\frac{\frac{\omega}{2}\|\Delta s(\omega)\|^2_X 
		+o(\|\Delta s(\omega)\|_X^2)-\rho(x,\omega)}{ |\lambda_{x,\omega}\big(\Delta s(\omega)\big)|}
	\end{split}
	\end{align}
	since from the computation strategy for $\Delta s (\omega)$ we in particular have
	\begin{align} \label{eq:negativitylambdas}
	\lambda_{x,\omega}\big(\Delta s(\omega)\big) \leq  \lambda_{x,\tilde \omega}^\mu \big( \Delta x^\mu (\tilde \omega) \big) \leq -\frac 1 2 \norm{\Delta x (\omega^c)}{X}^2 \leq 0
	\end{align}
	following the later steps of \eqref{eq:combined}. For the absolute value of the second order decrease model we can use \eqref{eq:negativitylambdas} together with Lemma~\ref{lem:equivomega} and \eqref{eq:skleinerx} to obtain
	\begin{align}
	\begin{split} \label{eq:lambdaabs}
	|\lambda_{x,\omega}\big(\Delta s(\omega)\big)| &\geq |\lambda_{x,\tilde \omega}^\mu \big( \Delta x^\mu (\tilde \omega) \big)| \geq \frac 1 2 \norm{\Delta x (\omega^c)}{X}^2 \geq \frac 1 2 \big( \frac{\omega + \ksum}{\omega^c + \ksum} \big)^2 \norm{\Delta x (\omega)}{X}^2 \\
	&\geq \frac 1 2 \big( \frac{\omega + \ksum}{(1+\eta)(\omega^c + \ksum)} \big)^2 \norm{\Delta s (\omega)}{X}^2 \eqqcolon C \norm{\Delta s (\omega)}{X}^2
	\end{split}
	\end{align}
	where $C = C(\omega,\omega^c,\keins+\kzwei,\eta)>0$ denotes the constant from above. In particular, note that $C$ remains bounded in the limit of $\omega \to 0$ and is also well-defined in the limit case of $\omega = 0$ close to optimal solutions with $\ksum > 0$.
	
	We may assume that the numerator of the latter expression in \eqref{eq:gammaest} is non-positive, otherwise the desired inequality for $\gamma (x,\omega)$ is trivially fulfilled. Thus, we take advantage of \eqref{eq:lambdaabs} in order to decrease the positive denominator to achieve
	\begin{align*}
	\gamma(x,\omega) \geq 1 + \frac{\omega}{2C} - \varepsilon - \frac{\rho(x,\omega)}{C\norm{\Delta s (\omega)}{X}^2}
	\end{align*}
	where for any $\varepsilon > 0$ there exists a neighborhood of the optimal solution $x_*$ such that the above estimate holds.
	
	Now, the assumption \eqref{eq:Hcontass} for the $\rho$-term immediately implies the eventual admissibility of $\Delta s (\omega)$ for sufficient decrease according to \eqref{eq:suffdes}.
\end{proof}

\section{Numerical Results} \label{sec:numres}
Let us now showcase the functionality of our inexact Proximal Newton method and also compare its performance to the case of exact computation of update steps. To this end, we consider the following function space problem on $\Omega \coloneqq [0,1]^3 \subset \RR^3$: Find a vector field $u \in H^1_0(\Omega,\RR^3)$ that minimizes the composite objective functional $F$ defined via
\begin{align} \label{eq:toymodelfunc}
\begin{split}
F(u) \coloneqq f(u) +  \int_\Omega c \norm{u}{1} \, \de \omega
\end{split}
\end{align}
for some parameter $c > 0$ as a weight for the $\mathrm{L}_1$-norm term where the smooth part $f\colon H^1_0(\Omega,\RR^3) \to \RR$ is given by
\begin{align*}
	f(u) \coloneqq \int_\Omega \frac 12 \norm{\nabla u}{F}^2 + \alpha \max\big( \norm{\nabla u}{F} - 1, 0 \big)^2 + \beta \frac{u_1 ^3 u^2_2 u_3}{1+u_1^2+u_2^2+u_3^2} + \rho \cdot u \, \de \omega
\end{align*}
with parameters $\alpha,\beta \in \RR$ as well as a force field $\rho : \Omega \to \RR^3$. The norm $\norm{\cdot}{F}$ denotes the Frobenius norm of the respective Jacobian matrices $\nabla u$. 

We have to note here that $f$ technically does not satisfy the assumptions made on the smooth part of the composite objective functional specified above in the case $\alpha \neq 0$ due to the lack of semi-smoothness of the corresponding squared max-term. The use of the derivative $\nabla u$ instead of function values $u$ creates a norm-gap which cannot be, as usual,  compensated by Sobolev-embeddings and hinders the proof of semi-smoothness of the respective superposition operator. However, we think that slightly going beyond the framework of theoretical results for numerical investigations can be instructive. 

In what follows we will choose the force-field $\rho$ to be constant on $\Omega$ and to this end introduce the so-called load factor $\tilde \rho > 0$ which then determines $\rho = \tilde \rho (1,1,1)^T$. Now that we have fully prescribed the composite objective functional $F$, we recognize that its non-smooth part $g$ is given by the integrated $\mathrm{L}_1$-norm term with constant prefactor $c > 0$. 
Let us also emphasize here that the underlying Hilbert space is given by $X = H^1_0(\Omega,\mathbb R^3)$ which also determines the norm choice for regularization of the subproblem.

Now, we will explain the specifics of our implementation of the method: 
%
To compute inexact update steps via the second order model problem \eqref{eq:dampedstep} used a so-called Truncated Non-smooth Newton Multigrid Method (TNNMG): In short, this method can be described as a mixture of exact, non-smooth Gauß-Seidel steps for each block component and global truncated inexact Newton steps enhanced with a line-search procedure. Analytical proofs for convergence for convex and coercitive problems as well as  convergence properties have been established, cf. \cite{Graeser2018}, and functionality for demanding applications has been investigated, cf. \cite{Graeser2020} and \cite{Sander2019}. Additionally, this subproblem solver is provided with stopping criteria in the form of our inexactness criteria \eqref{eq:altrelerror} and \eqref{eq:omegatilde} with corresponding parameters $\eta_k \in [0,1]$ for each iteration and global $\omax>0$. The required derivatives were computed by automatic differentiation, using adol-C, cf. \cite{Walther2012}. 

Another topic of interest concerning the implementation of our algorithm is the choice of the aforementioned parameters $\omega$, $\eta$ and $\omax$ governing the convergence behavior of our method. While - as discussed in its introduction in \eqref{eq:omegatilde} - $\omax$ can be chosen constant and is supposed to be very large, this is not the case for the regularization parameters $\omega$ and the forcing terms $\eta$. Adaptive choices for these quantities are subject to our current research and yield promising results but in the present treatise we want to focus on the aspect of and criteria for inexactness itself. Thus, we decided to take a rather heuristic approach of doubling $\omega$ in the case that the update step was not accepted and multiply it by $\big( \frac 12 \big)^{n\cdot n}$ (where $n$ denotes the number of successful consecutive updates) in the remaining case. Once $\omega$ drops below some threshold value, we set it to zero in order to locally use undamped update steps.

Similarly, we multiply the forcing term $\eta$ by $0.6$ for accepted updates and leave it as it is in case the increment was rejected by the sufficient decrease criterion. This rather simple strategy for the choice of parameters ensures the convergence of both $\eta$ and $\omega$ to zero along the sequence of iterates and thus also from a theoretical standpoint enables superlinear convergence as formulated in Theorem~\ref{thm:localconv}.
In addition to the correction norm stopping criterion for the outer loop in Algorithm~\ref{alg:inexproxnewton}, we introduced a threshold value for the descent according to the modified quadratic model $\lambda_{x,\omega}\big( \Delta s (\omega) \big)$, i.e., the computation stops once we achieve $(1+\omega)\big|\lambda_{x,\omega}\big( \Delta s (\omega) \big)\big| < 10^{-13}$ for an admissible step $\Delta s (\omega)$.

Let us now consider the actual tests we performed in order to demonstrate the performance of our algorithm: Firstly, we will demonstrate the consistency between results of the inexact method and the exact version the functionality of which has been thoroughly investigated in \cite{Poetzl2022}. Afterwards, we exhibit the gains in effectivity by enhancing the exact algorithm with the inexactness criteria introduced above. Lastly, we analyze the implementation of the latter criteria and try to get a grasp on how they affect the process of solving the subproblem for update step computation. 

All in all, we use \eqref{eq:toymodelfunc} with fixed parameters $c = 80$, $\beta = 40$, $\tilde \rho = -100$ and let $\alpha \geq 0$ vary. Increasing $\alpha$ magnifies the influence of the squared $\max$-term in \eqref{eq:toymodelfunc} and thus makes the corresponding minimization problem harder to solve.

For the first one of the above concerns, we consider Figures \ref{fig:normcorr5lvl} and \ref{fig:energies5lvl} which display plots for either the $H^1$-norms of (accepted) update steps or the energies (, i.e., objective values,) at the corresponding updated iterates for values of $\alpha$ from 40 to 160 in steps of 40.

\begin{figure}[h]
	\centering
	\begin{subfigure}[b]{0.45\textwidth}
		\begin{center}
			\resizebox{\textwidth}{!}{\input{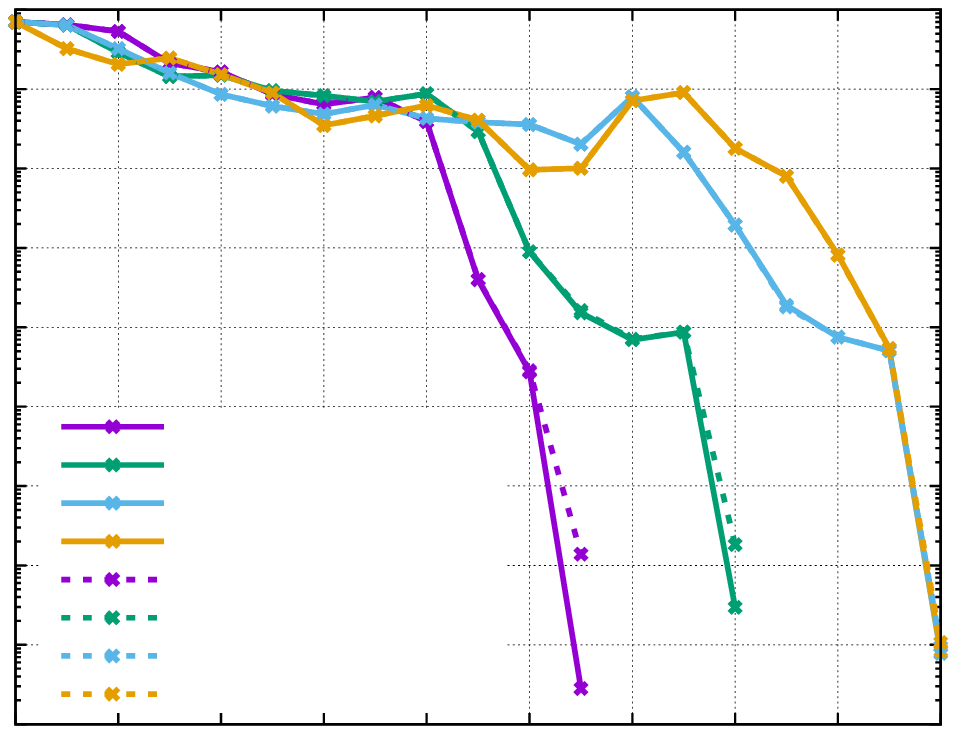}}
		\end{center}
		\caption{Correction norms $\norm{\Delta s_k (\omega_k)}{H^1(\Omega)}$}
		\label{fig:normcorr5lvl}
	\end{subfigure}
	\hfill
	\begin{subfigure}[b]{0.45\textwidth}
		\begin{center}
			\resizebox{\textwidth}{!}{\input{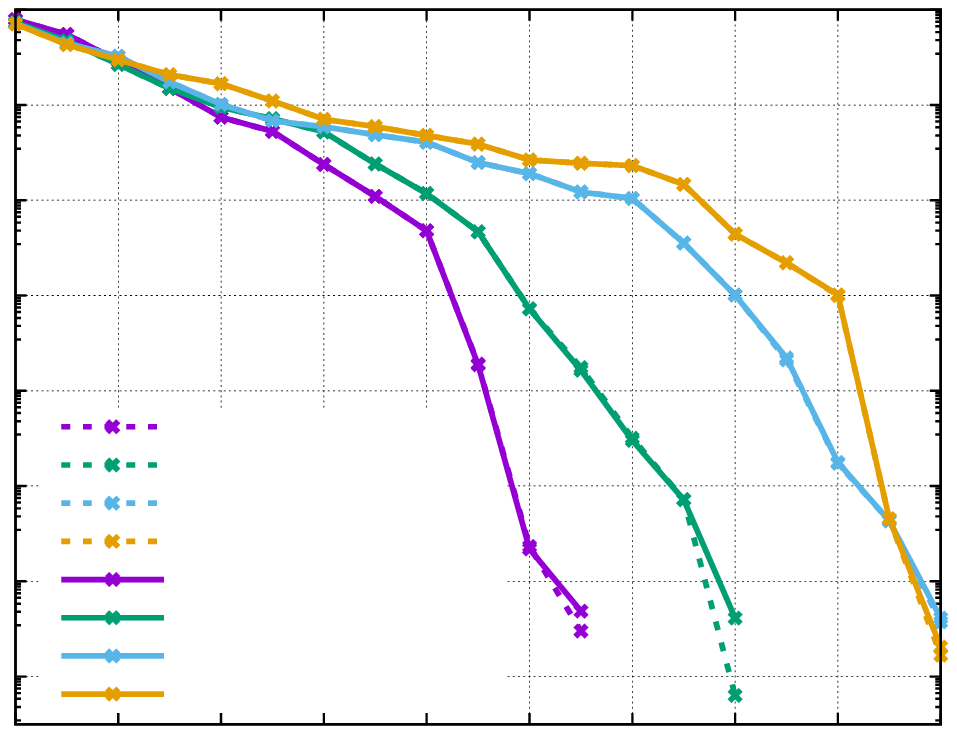}}
		\end{center}
		\caption{Objective function values $F(x_k)$}
		\label{fig:energies5lvl}
	\end{subfigure}
	\caption{Graphs of correction norms and corresponding objective values for $c = 80$, $\beta = 40$, $\tilde \rho = -100$ and $\alpha \in \{40,80,120,160\}$ for the Proximal Newton method with five uniform grid refinements.}
\end{figure}

As the plots illustrate, the difference in quality of the update steps for exact and inexact computations is marginal to non-existent. This can also be retraced in Table \ref{tab:totaliterations} where we list the number of accepted iterations and such which did not yield sufficient decrease according to \eqref{eq:suffdes} for $\gamma = \frac 12$.

\begin{table}[h]
	\begin{center}
		\begin{tabular}{| c | c || c | c | c | c | c | c | c | c | c | c | c |}
			\hline
			\multicolumn{2}{|c||}{$\alpha$} 								& 0 & 	20 & 	40 & 	60 & 	80 & 	100 & 	120 & 	140 & 	160 & 	180 & 	200 \\
			\hline 
			\xrowht{12pt} \multirow{2}{*}{Exact\vspace{-5mm}} 	& Accepted & 15 &	8 & 	12 & 	15 & 	15 & 	12 & 	19 & 	23 & 	19 & 	22 & 	21 \\
			\cline{2-13}
			\xrowht{12pt} 										& Declined & 11 & 	5 & 	10 & 	17 & 	19 & 	13 & 	23 & 	68 & 	25 & 	33 & 	30 \\
			\hline
			\xrowht{12pt} \multirow{2}{*}{Inexact\vspace{-5mm}} & Accepted & 15 & 	9 & 	12 & 	14 & 	15 & 	14 & 	19 & 	20 & 	19 & 	20 & 	21 \\
			\cline{2-13}
			\xrowht{12pt}									    & Declined & 7 & 	4 & 	10 & 	18 & 	21 & 	44 & 	23 & 	29 & 	25 & 	31 & 	30 \\
			\hline
		\end{tabular}
	\end{center}
	\caption{Number of accepted and declined iterations for different prefactor values $\alpha$ for fixed parameters $\beta=40$, $c=80$ and $\tilde \rho = -100$ in the exact and inexact case.}
	\label{tab:totaliterations}
\end{table}

Let us now consider the number of subproblem solver iterations we saved by allows inexact computations. Figure~\ref{fig:tnnmgiterations} displays the number of TNNMG-iterations necessary for the computation of each accepted update step.
\begin{figure}[h]
	\begin{center}
		\resizebox{0.5\textwidth}{!}{\input{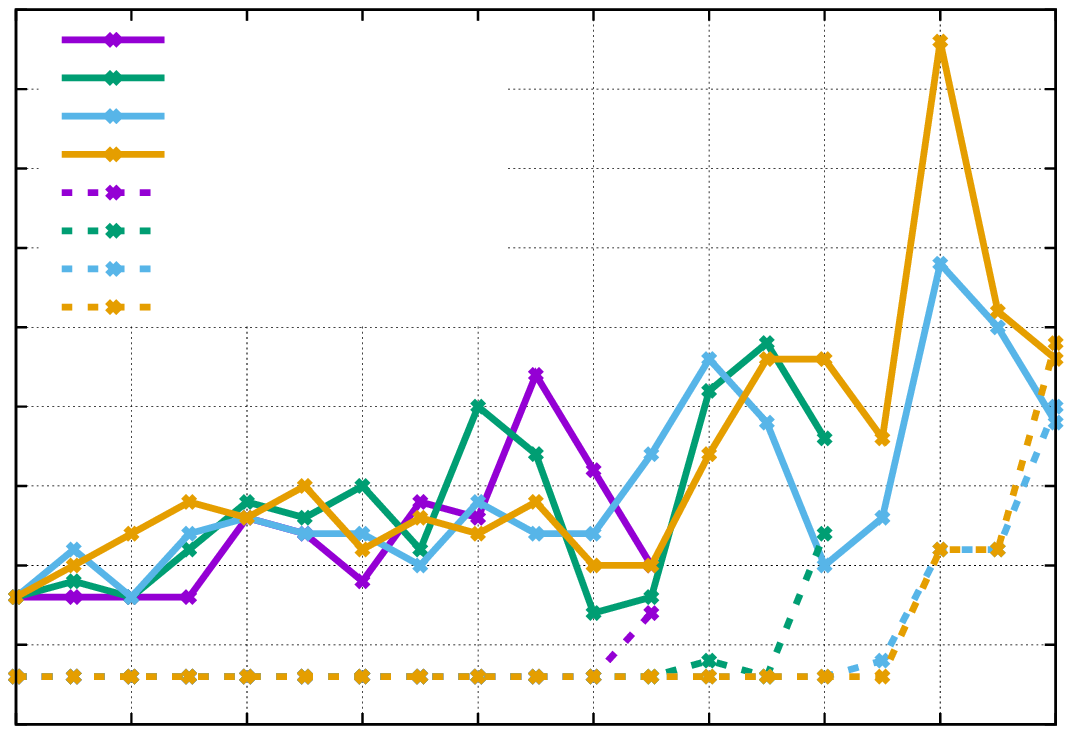}}		
	\end{center}
	\caption{Comparison of the number of TNNMG-iterations for $c = 80$, $\beta = 40$, $\tilde \rho = -100$ and $\alpha \in \{40,80,120,160\}$ for the exact and inexact case.}
	\label{fig:tnnmgiterations}
\end{figure}
Also here, the results leave no room for interpretation and substantiate the effectivity of the inexact method. To further reinforce these findings, we give the number of total TNNMG-iterations for the scenarios from Table \ref{tab:totaliterations} in Table \ref{tab:tnnmgiterations}. Note here that these numbers include the subproblem steps for the computation of both accepted and declined updates $\Delta s(\omega)$.
\begin{table}[h]
	\begin{center}
		\begin{tabular}{| c || c | c | c | c | c | c | c | c | c | c | c |}
			\hline
			\xrowht{12pt} $\alpha$  	& 0 	& 20 	& 40 	& 60 	& 80 	& 100 	& 120 	& 140 	& 160 	& 180 	& 200 \\
			\hline
			\xrowht{12pt} Exact 		& 227 	& 143 	& 269 	& 434 	& 488 	& 405 	& 695 	& 943 	& 784 	& 983 	& 954 \\
			\hline
			\xrowht{12pt} Inexact 		& 66 	& 40 	& 70 	& 108 	& 143 	& 182 	& 160 	& 211 	& 169 	& 202 	& 220 \\
			\hline
		\end{tabular}
	\end{center}
	\caption{Comparison of the number of total TNNMG-iterations $N$ for the exact and the inexact case in the scenario of Table~\ref{tab:totaliterations}.}
	\label{tab:tnnmgiterations}
\end{table}
We can see that we at least spare two thirds of the steps within the subproblem solver and as $\alpha$ increases even only need a quarter of them in comparison to the exact method.

As mentioned beforehand, we also want to take a look at how the inexactness criteria affect the solving process of the step computation subproblems. To this end, we consider two aspects each of which covers one of our criteria based on an exemplary Proximal Newton steps: On the one hand, in order to investigate the relative error criterion \eqref{eq:relerror}, we computed every Proximal Newton step twice. Within the first computation, we neglected inexactness criteria which allowed us to then compute the actual relative error $E_{rel}$ of the TNNMG iterates in the second and actually inexact computation process. This makes it possible to compare the relative error to the estimate $E_{est}$ which we use for easier evaluation, cf. \eqref{eq:altrelerror}. As can be seen in 
the plots in Figure~\ref{fig:relerror}, both quantities stay within the same order of magnitude and eventually drop below the bound $\eta$ from the inexactness criterion. This implies that the estimate which implicitely uses the convergence rate of our multigrid subproblem solver constitutes and adequate and easy-to-evaluate alternative to the actual relative error. Note that the estimated error $E_{est}$ is not assigned within the first two TNNMG iterations since we have to take more of these into consideration in order to obtain a valid estimate for multigrid convergence rates $\theta$ in \eqref{eq:altrelerror}.

On the other hand, we also considered the subgradient inexactness criterion \eqref{eq:subgraddesc}. As mentioned beforehand, we introduced this criterion for globalization purposes with the intention that it would not interfere with the minimization process, especially in the local acceleration phase close to optimal solutions. In fact, we have noticed that throughout our tests the determining quantity for further solving the subproblem was the relative error estimate and not that $\tilde \omega$ from \eqref{eq:omegatilde} was too large. For example, over the TNNMG-iterations of the Proximal Newton step considered in Figure~\ref{fig:relerror} we had nearly constant $\tilde \omega \approx 1.5$, clearly remaining below our choice of $\omax \coloneqq 10^8$.

\begin{figure}[h]
	\begin{center}
		\resizebox{0.5\textwidth}{!}{\input{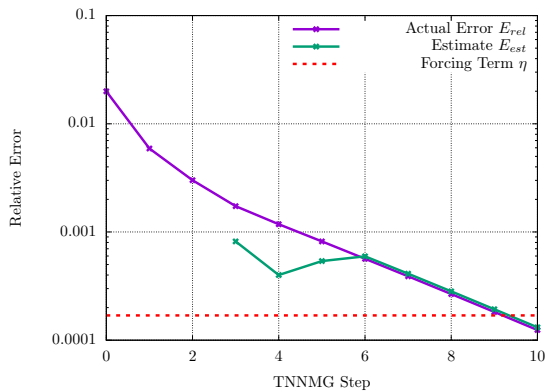}}
	\end{center}
	\caption{Comparison of the relative error $E_{rel}$ and its estimate $E_{est}$ together with the forcing term bound $\eta$ in Proximal Newton step $k = 17$ for $\alpha = 160$.}
	\label{fig:relerror}
\end{figure}

\section{Conclusion} \label{sec:concl}


We have extended the globally convergent and locally accelerated Proximal Newton method in Hilbert spaces from \cite{Poetzl2022} to inexact computation of update steps. Additionally, we have improved local convergence proofs by considering regularized gradient mappings and have thereby disclosed the influence of damping and inexactness to local convergence rates. We have found inexactness criteria that suit the general infinite-dimensional Hilbert space setting of the present treatise and can be evaluated cheaply within every iteration of the subproblem solver. Using these inexactness criteria, we have also been able to carry over all convergence results, local as well as global, from the exact case. The application of our method to actual function space problems is enabled by using an efficient solver for the step computation subproblem, the Truncated Non-smooth Newton Multigrid Method. We have displayed functionality and efficiency of our algorithm by considering a simple model problem in function space.

 Room for improvement is definitely present in the choice of both regularization parameters $\omega$ and forcing terms $\eta$. The former can be addressed by different approaches like estimates for residual terms of the quadratic model established in subproblem \eqref{eq:dampedstep}, cf. \cite{Weiser2007}, or adapted strategies for controlling time step sizes in computing solutions of ordinary differential equations. For the forcing terms on the other hand, adaptive choices have already been studied for inexact Newton methods e.g. in \cite{An2007,DdBook}. While these can be carried over to our non-smooth scenario, it also appears to be promising to tie the choice of regularization parameters and forcing terms together due to their similar convergence behavior. This idea both reduces the computational effort and better reflects the problem structure at hand.


\bibliographystyle{siamplain}
\bibliography{References.bib}
\end{document}